\newtheorem{Theoreme}{Théorème}[section]
\newtheorem{Proposition}[Theoreme]{Proposition}
\newtheorem{Theorem}{Theorem}[section]
\newtheorem{Lemma}[Theorem]{Lemma}
\DeclareMathOperator{\PP}{\blackboard{P}}
\DeclareMathOperator{\RR}{\blackboard{R}}
\DeclareMathOperator{\W}{\mathbf{W}}
\DeclareMathOperator{\Hr}{\calig{H}}
\DeclareMathOperator{\BB}{B}
\DeclareMathOperator{\Lip}{Lip}
\newcommand{\iid}{\emph{i.i.d.}\,}
\newcommand{\ie}{\emph{i.e.}\,}
\newcommand{\calig}[1]{\mathcal{#1}}
\renewcommand{\H}{\calig{H}}
\newcommand{\blackboard}[1]{\mathbf{#1}} 
\newcommand{\N}{\blackboard{N}}
\newcommand{\EB}{\blackboard{E}}
\newcommand{\R}{\blackboard{R}}
\renewcommand{\d}[1]{\textup{d} #1}
\newcommand{\delimleft}[2]{\ifcase #1\or
    \bigl#2\or %
    \Bigl#2\or %
    \biggl#2\or %
    \Biggl#2\or %
    \left#2\fi}
\newcommand{\delimright}[2]{\ifcase #1\or
    \bigr#2\or %
    \Bigr#2\or %
    \biggr#2\or %
    \Biggr#2\or %
    \right#2\fi}
\newcommand{\pa}[2][5]{
    \delimleft{#1}{(} #2 \delimright{#1}{)}}
\newcommand{\br}[2][5]{
    \delimleft{#1}{[} #2 \delimright{#1}{]}}
\newcommand{\ac}[2][1]{
    \delimleft{#1}{\{} #2 \delimright{#1}{\}}}
\newcommand{\abs}[2][1]{
    {\delimleft{#1}{\lvert} #2%
    \delimright{#1}{\rvert}}}
\newcommand{\norm}[2][1]{
    {\delimleft{#1}{\lVert} #2%
    \delimright{#1}{\rVert}}}
\newcommand{\normLp}[3][1]{
    \norm[#1]{#3}_{\scriptscriptstyle #2}}
\newcommand{\un}{
    \boldsymbol{1}}
\newcommand{\esp}[2][5]{
    \EB \br[#1]{#2}}
\renewcommand{\le}{\leqslant}
\renewcommand{\ge}{\geqslant}
\newcommand{\ds}{\displaystyle}
\title{Weak error for nested Multilevel Monte Carlo}
\author{Daphn\'e Giorgi\footnote{Sorbonne Université, Sorbonne Paris Cité, CNRS, Laboratoire de Probabilités Statistique et Modélisation, LPSM, F-75005 Paris, France, E-mail: \url{daphne.giorgi@sorbonne-universite.fr}},
Vincent Lemaire\footnote{Sorbonne Université, Sorbonne Paris Cité, CNRS, Laboratoire de Probabilités Statistique et Modélisation, LPSM, F-75005 Paris, France,
E-mail: \url{vincent.lemaire@sorbonne-universite.fr}}, Gilles Pag\`es\footnote{Sorbonne Université, Sorbonne Paris Cité, CNRS, Laboratoire de Probabilités Statistique et Modélisation, LPSM, F-75005 Paris, France, E-mail: \url{gilles.pages@sorbonne-universite.fr}}
}
\begin{document}

\maketitle
  
\begin{abstract}
	This article discusses MLMC estimators with and without weights, applied to nested expectations of the form $\esp{f(\esp{F(Y,Z)|Y})}$. More precisely, we are interested on the assumptions needed to comply with the MLMC framework, depending on whether the payoff function $f$ is smooth or not. A new result to our knowledge is given when $f$ is not smooth in the development of the weak error at an order higher than 1, which is needed for a successful use of MLMC estimators with weights.
\end{abstract}

\noindent \emph{Keywords:} Multilevel Monte Carlo; Weighted Multilevel Monte Carlo; Nested Monte Carlo; Weak error expansion.

\noindent \emph{MSC 2010:} primary 65C05; secondary 65C30.

\section{Introduction}

Multilevel estimators are commonly used when the underlying random variable of interest~--~here~$f\pa{\esp{F(Y,Z)|Y}}$, with $Y$ and $Z$ independent as far as nested simulation is concerned~--~cannot be simulated exactly at a reasonable computational cost. However, such approximations ~--~here  $f\pa{\frac 1N \sum_{k=1}^N F(Y,Z_k)}$~-- induce some bias. Nested simulation is one of the two most popular setting where Multilevel method are implemented, the other being the numerical schemes associated to stochastic dynamics.

The optimal calibration and the resulting performances of Multilevel Monte Carlo estimators depend on the weak and strong error rate of convergence of these simulable proxies
toward  $f\pa{\esp{F(Y,Z)|Y}}$. By weak error, we mean here an expansion of the bias as a function of a given parameter $h$ representative of the (inverse) complexity.

The existence of weak error expansions at order one leads to the (regular and original) Multilevel Monte Carlo (MLMC) method  introduced by M. Giles in~\cite{Gi08}, whereas higher order expansions led naturally  to develop a weighted multilevel framework, called Richardson-Romberg Multilevel  method (ML2R) introduced in~\cite{LePa17}. However, the existence of such an expansion not only depends upon   random variable of interest and its approximations but also on the regularity of the ``payoff'' function $f$, as it has been widely popularized by the analysis of time discretization schemes of Brownian diffusion processes (see~\cite{TaTu90} and~\cite{BaTa196}).

The seminal result concerning the first order weak error expansion  for nested Monte Carlo simulation when $f$ is not regular~--~namely a quantile~--~is due to Gordy and Juneja in~\cite{GoJu10}.

For such indicator function the strong rate of convergence remains slow and reduces the efficiency of regular multilevel estimators since their performances are ruled by this strong convergence rate. In particular they no longer behave  as unbiased or almost unbiased estimators as it is the case for faster strong convergence regimes.

By contrast weighted multilevel estimators are still  almost unbiased in some sense but this performance strongly relies on higher order expansions of the weak error. So the main objective and result of this paper is to establish (see Proposition~\ref{prop:nested_weakII}) such a higher order expansion for non-smooth payoff function $f$ in a nested simulation  framework. 

Let us briefly recall the multilevel paradigm (see~\cite{PaNumProb}). Let $Y_0 \!\in L^2(\Omega, {\cal A}, \PP)$ be a random variable and $Y_h$, $h\!\in {\cal H}= \big\{\frac{\mathbf{h}}{n}, \, n\ge1\big\}$ be a family of approximations of $Y_0$ such that $\lim_{h\to 0}\|Y_h-Y_0\|_2 =0$ with a simulation cost of the form
\[
{\rm Cost}(Y_h) = \kappa h^{-1}
\]
so that the parameter $h$ is inverse linear in the complexity.  Its role in the weak expansion error will lead us to call it {\em bias parameter}.

The central idea behind the regular {MLMC} estimator is to consider a $R$-tuple of parameters $h_j = h/M^{j-1}$, $j=1,\ldots,R$ ($h\!\in {\cal H}$) and to write the telescopic sum
\[
\esp{Y_{h_R}} = \esp{Y_{h_1}}  + \sum_{j=2}^R \esp{Y_{h_j}-Y_{h_{j-1}}}
\]
which suggests to introduce the estimator (see~\cite{Gi08})
\begin{equation} \label{def::MLMC}
\widehat I^N_{h,R,q} = \frac{1}{N_1} \sum_{k=1}^{N_1} Y_{h}^{(1),k} + \sum_{j=2}^R \frac{1}{N_j} \sum_{k=1}^{N_j} \pa{Y_{h_j}^{(j),k} - Y_{h_{j-1}}^{(j),k}}
\end{equation}
where $\big( Y_{h_j}^{(j),k} \big)_{k=1,\ldots,N_j}$ are independent copies as $k$ varies of $Y^{(j)}_{h_j}$ itself ``attached''  to $Y^{(j)}_0$ where $(Y^{(1)}_0, \ldots,Y^{(R)}_0)$ are i.i.d. with the same distribution as $Y_0$. The size $N_j$ of each simulation at {\em level} $j$ is of the form $N_j = \lceil q_jN\rceil$, $j=1,\ldots,R$. 

If a first order weak expansion error assumption  
\begin{equation} \tag{$WE_{\alpha,1}$}
    \esp{Y_h} = \esp{Y_0} + c_1 h^{\alpha} +o(h^{\alpha})
\end{equation}
is fulfilled for some $\alpha>0$, then
\[
\esp{\widehat I^N_{h,R,q}} = \esp{Y_{h_R}}
 = \esp{Y_0} + c_1\frac{h}{M^{R-1}} + o\big(h/M^{R-1}\big).
 \]
 which dramatically reduces the bias compared to a crude Monte Carlo simulation based on i.i.d. copies of $Y_h$. At this stage the calibration of the allocation parameters $q_1,\ldots,q_{_R}$ across the $R$ levels relies on  a strong error convergence rate assumption
\begin{equation}
\label{strong_error} \tag{$SE_\beta$}
    \forall\, h,h'\!\in \Hr,\quad \normLp[1]{2}{Y_h-Y_{h'}} \le V_1 |h-h'|^{\beta},
\end{equation}
 or its variants (see~$e.g.$~\cite{PaNumProb} among other references where this conditions are discussed). Thus, it happens that $Y_h-Y_{h'}$ is replaced by a random variable $Y_{h,h'}$ satisfying~\eqref{strong_error} and such that $\esp{Y_{h,h'}} = \esp{Y_h-Y_{h'}}$. This calibration aims at minimizing the {\em effort} of the estimator $\widehat I^N_{h,q,R}$, that is the product of its variance by its complexity, given a prescribed Root Mean Square Error ({\emph RMSE}) level $\|\widehat I^N_{h,R,q}-Y_0\|_2 \le \varepsilon$.
 
If a higher order weak error expansion can be established, namely 
\begin{equation}
    \label{weak_error} \tag{$WE_{\alpha,R}$} 
\esp{Y_h} = \esp{Y_0} +\sum_{r=1}^R c_r h^{\alpha r} + o(h^{\alpha R}),
\end{equation}
then there exists {\em weights} $(\mathbf{w}_j)_{j=1,\ldots,R}$, {\em only depending on $\alpha$, $M$ and $R$} such that $\sum_{1\le j\le R} \mathbf{w}_j =1$ and satisfying
\[
 \sum_{j=1}^{R} \mathbf{w}_j \esp{Y_{h_j}}  = \esp{Y_0} +\widetilde{ \mathbf{w}}_{_{R+1}} c_R h^{\alpha R} + o\Big(h^{\alpha R}\Big).
\]
These weights, solution to a Vandermonde system (see~\cite{LePa17}), as well as $\widetilde{\mathbf{w}}_{_{R+1}}$ have closed formulas ($ \widetilde{\mathbf{w}}_{_{R+1}}= \sum_{i=1}^R \mathbf{w}_i n_i^{-\alpha R}$). This naturally leads to define the {\em weighted multilevel estimator} (or {\em Richardson-Romberg multilevel estimator}, ML2R) as 
\begin{equation}\label{def::ML2R}
\widetilde I^N_{h,R,q} = \frac{1}{N_1} \sum_{k=1}^{N_1} Y_{h}^{(1),k} + \sum_{j=2}^R \frac{\W_j^R}{N_j} \sum_{k=1}^{N_j} \pa{Y_{h_j}^{(j),k} - Y_{h_{j-1}}^{(j),k}},
\end{equation}
where $\mathbf{W}^R_j=  \mathbf{w}_j + \cdots+ \mathbf{w}_{_R}$, $j=1,\ldots,R$ and the $Y^{(j),k}_{h_j}$ are as above. One checks that such an estimator ``kills'' the bias  in a much more efficient manner yet since
\[
\esp{\widetilde I^N_{h,R,q}}  = \esp{Y_0} + \widetilde{ \mathbf{w}}_{_{R+1}} c_R h^{\alpha R} + o\Big(h^{\alpha R}\Big).
\]

Then $\widetilde I^N_{h,R,q} $ can be calibrated like the MLMC estimator to minimize its effort for prescribed \emph{RMSE}. For more precise results on the performances of these two families of estimators, we refer to~\cite{LePa17} or~\cite{Gi17} or~\cite{PaNumProb}. But the important fact to be kept in mind is that, as far as nested Monte Carlo simulations are concerned with $f = \mbox{\bf 1}_{[a,+\infty)}$ (see next section for the specification of the r.v. $Y_h$ for this purpose), the $\beta $ parameter is lower than $1$ (see Proposition~\ref{prop::strong_error_indicator}) so that, as a consequence, the ML2R estimator $\widetilde I^N_{h,R,q} $ behaves ``almost'' like an unbiased estimator, for which the cost is known to be $K \varepsilon^{-2}$, $K>0$ constant.  More precisely, if~\eqref{weak_error} holds for every {\em depth}  $R\ge 1$ and $\lim_{R\to\infty}|c_R|^{\frac 1R} = \widetilde c_\infty \in (0, +\infty)$, then
\[
\mathrm{Cost}\left(\widetilde I^{N(\varepsilon)}_{h(\varepsilon),R(\varepsilon),q(\varepsilon)} \right) \preceq  K_{\alpha, \beta,M}\varepsilon^{-2}\cdot e^{\frac{1-\beta}{\sqrt{\alpha}}\sqrt{2\log(1/\varepsilon)\log(M)} }, 
\]
where we recall that $f(\varepsilon) \preceq g(\varepsilon)$ if and only if $\limsup_{\varepsilon\to0} g(\varepsilon)/f(\varepsilon) \leq 1$, $K_{\alpha, \beta,M}>0$ is constant and we highlight that $e^{\frac{1-\beta}{\sqrt{\alpha}}\sqrt{2\log(1/\varepsilon)\log(M)} } = o\left( \varepsilon^{-\eta}\right)$ for all $\eta>0$. Note that some numerical experiments carried out in~\cite{LePa17} and in~\cite{Gi17} confirm the fact that weighted multilevel ML2R simulations outperform regular MLMC estimator.

The paper is organized as follows. In Section \ref{sec::nested} we give the description of the nested framework. In Section \ref{sec::useful_results} we give some useful results which will be valid in both frameworks, both $f$ smooth and not. Section \ref{sec::smooth} is devoted to the smooth case, with a particular attention to the antithetic approach, and in Section \ref{sec::non_smooth} we treat the non smooth case and we give a new result concerning the weak error.

\section{Nested Monte Carlo simulation}
\label{sec::nested}

The purpose of the so-called {\em nested} Monte Carlo method is to compute by simulation nested expectations of the form
\begin{equation*}
    \esp{ f\pa[1]{\esp{\Xi|Y}} }, 
\end{equation*}
where $(\Xi,Y)$ is an $\R\times \R^d$-valued  couple of random variables defined on a probability space $(\Omega,{\cal A}, \PP)$ satisfying $\Xi\!\in L^2$ and $f: \R \to \R$ is a specified function such that $f(\esp{\Xi|Y})\!\in L^2$. 

We assume that there exist a Borel function  $F:\R^d\times \R^q \to \R$ and a random vector $Z: (\Omega,{\cal A})\to \R^{q}$ independent of $Y$ such that
\begin{equation*}
	\Xi = F(Y, Z).
\end{equation*}
Let us introduce the Borel function $\phi_0:\R^d \to \R$  defined by $\phi_0(y) = \esp{F(y, Z)}$ so that one may set $\esp{F(Y, Z)|Y} = \phi_0(Y)$.
Then one has the following representation
\begin{equation*}
    \esp{\Xi | Y} = \phi_0(Y) = \int_{\R^{q}} F(Y, z) \PP_{Z}(\d z).
\end{equation*}
To comply with the multilevel framework, we set $K_0\in\N^*$ and  $\Hr = \ac{1/K, \, K\!\in K_0\N^*}$, 
\begin{equation*}
	 X_0 := \esp{\Xi|Y}, \quad   
	 X_h := \frac 1K \sum_{k=1}^K F(Y, Z_k) \;\text{ with }\; h = \frac{1}{K}\in\Hr, 
\end{equation*}
where $(Z_k)_{k\ge 1}$ is an i.i.d. sequence of random vectors with  the same  distribution as $Z$, defined on $(\Omega,{\cal A}, \PP)$  and independent of $Y$ (up to an enlargement of the probability space  if necessary) and
\begin{equation*}
	Y_0 := f(X_0), \quad   
	Y_h := f(X_h).
\end{equation*}
To prove that the nested Monte Carlo estimator satisfies the bias error expansion~\eqref{weak_error} and the strong approximation error~\eqref{strong_error}, we introduce the random functions, $\forall y \in \R^d,$ 
\begin{align}
    D(y) &= F(y, Z) - \esp{F(y, Z)}, \\
    E_h(y) &= \frac{1}{K} \sum_{k=1}^K \Big( F(y, Z_k) - \esp{F(y, Z)} \Big) = \frac{1}{K} \sum_{k=1}^K F(y, Z_k) - \phi_0(y).
\end{align}
Note that $E_h(y)$ is the statistical error of the inner Monte Carlo estimator, which can be rewrited as $\ds E_h(y) = \frac{1}{K} \sum_{k=1}^K D(y)^{(k)}$ where $(D(y)^{(k)})_{k \ge 1}$ is a sequence of \iid copies of $D(y)$, and that $E_h(Y) = X_h-X_0$. 

We distinguish between two main frameworks, depending on whether or not $f$ is smooth, a classical example of non-smoothness being $f= \un_{(a,b)}$ (see~\cite{DeLo09}).
When the function $f$ is smooth enough, say $f \in {\cal C}^{1+\rho}(\R,\R)$ with $\rho\!\in (0,1]$, a variant of the former Multilevel \emph{nested} estimator has been proposed in~\cite{BuHaRe15},~\cite{HaAl12} and~\cite{ChLi12} (see also~\cite{Gi15}) to improve the  rate of strong convergence in order to attain the asymptotically unbiased setting, namely~\eqref{strong_error} with $\beta>1$. 
A root $M \ge 2$ being given, the idea is  to replace in the successive refined levels of the MLMC and ML2R estimators (see~\eqref{def::MLMC} and~\eqref{def::ML2R}) the difference $Y_{\frac hM}-Y_h$ (where $h= \frac{1}{K}$, $K\!\in K_0\N^*$)  by an antithetic type as follows
\[
Y_{h,\frac hM} :=  f\left(\frac{1}{MK} \sum_{k=1}^{MK} F\big(Y, Z_{k}\big)\right)- \frac 1M \sum_{m=1}^M f\left(\frac 1K \sum_{k=1}^K F\big(Y, Z_{(m-1)K+k}\big)\right) .
\]
It is clear that $\esp{Y_{h,\frac hM} } = \esp{Y_{\frac hM}-Y_{h} }$.

Before getting into the smooth and non smooth case, we give some useful results that will be valid in both frameworks and will be used to establish the higher order of weak error expansion.

\section{Useful results}
\label{sec::useful_results}

Following Comtet~\cite{Co74}, we introduce the partial Bell polynomials $\BB_{n, k}$ for $n \ge 1$ and $k =1,\dots,n$ defined by 
\begin{equation} \label{def:partialBell}
	\BB_{n, k}(x_1, \dots, x_{n-k+1}) = \sum \frac{n !}{\ell_1! \cdots \ell_{n-k+1}!} \pa[2]{\frac{x_1}{1!}}^{\ell_1} \cdots\pa[2]{\frac{x_{n-k+1}}{({n-k+1})!}}^{\ell_{n-k+1}}  
\end{equation}
where the summation takes place over all integers $\ell_1, \dots, \ell_n \ge 0$, such that $\ell_1 + 2 \ell_2 + \cdots + (n-k+1) \ell_{n-k+1} = n$ and $\ell_1 + \cdots + \ell_{n-k+1} = k$. Note that $\deg (B_{n,k})=k$. 
The complete Bell polynomials $\BB_n$ are defined by 
\begin{equation*}
	\BB_n(x_1,\dots,x_n) = \sum_{k=1}^n \BB_{n, k}(x_1, \dots, x_{n-k+1}).
\end{equation*}
The first statement is a formal Taylor expansion with integral remainder of $\esp{g( X_h)}$ around $\esp{g( X_0)}$, with $g:\RR\to\RR$ a test function.

\begin{Lemma}[Taylor expansion] \label{lem:taylor} Let $R \ge 0$ and let $g:\R \to \mathbf{C}$ be a $2R+1$ times differentiable function. 
    
Assume $\Xi = F(Y, Z) \in L^{2R+1}$ and let $\kappa_j(\xi)$ be  the $j$--th cumulant (a.k.a. semi-invariant) of a random variable $\xi$. We set $\kappa_{j,y}:= \kappa_j(D(y))$ with $D(y) = F(y, Z)-\esp{F(y, Z)}$ for $y \in \R^d$ and $j \in \ac{1,\dots,R}$. Let $(\BB_{n,k})_{1\le k\le n}$ be  the {\em partial Bell polynomials} defined by~\eqref{def:partialBell}. We then define for $r \in \N$, $r+1 \le n \le 2r$ and every $y \in \R^d$, 
$$
b_{r,n-r}(y) = \BB_{r,n-r}\pa{\frac{\kappa_{2,y}}{2}, \dots, \frac{\kappa_{2r-n+2,y}}{2r-n+2}}.
$$
Then 
\begin{equation} \label{eq:taylor_nested}
    \forall h \in \Hr, \quad \esp{g( X_h)} = \esp{g( X_0)} 
+ \sum_{r=1}^{2R-1} c(r,(2r+1)\wedge 2R) h^r + \calig{R}_{2R+1},
\end{equation}
with 
\begin{equation} \label{eq:taylor_coeff}
 c(r,k) = \frac{1}{r!} \sum_{\ell = r+1}^{k} \esp{g^{(\ell)}( X_0) b_{r, \ell-r}(Y)}, \;  1 \le r < k \le 2r+1, 
\end{equation}
and 
\begin{equation} \label{eq:taylor_reste}
    \calig{R}_{2R+1} = \frac{1}{(2R)!} \esp{\int_0^{ X_h- X_0} g^{(2R+1)}(t+ X_0) ( X_h- X_0-t)^{2R} \d t}. 
\end{equation}
\end{Lemma}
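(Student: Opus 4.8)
The plan is to condition on $Y$ first, so that everything reduces to a one-dimensional Taylor expansion of $g$ around the point $X_0 = \phi_0(Y)$, and then to read off the coefficients via the classical identity relating cumulants to the expansion of $\esp{g(X_0 + E_h(Y))}$ in powers of $h$. Concretely, write $X_h - X_0 = E_h(Y)$ with $E_h(y) = \frac1K\sum_{k=1}^K D(y)^{(k)}$ a normalized sum of $K = 1/h$ i.i.d.\ copies of $D(y)$. For fixed $y$, Taylor's formula with integral remainder at order $2R+1$ gives
\[
g(X_0 + E_h(y)) = \sum_{\ell=0}^{2R} \frac{g^{(\ell)}(X_0)}{\ell!} E_h(y)^\ell + \frac{1}{(2R)!}\int_0^{E_h(y)} g^{(2R+1)}(t+X_0)(E_h(y)-t)^{2R}\,\d t.
\]
Taking $\esp{\,\cdot\mid Y=y}$ and then integrating in $y$ (i.e.\ taking $\esp{\,\cdot\,}$), the remainder term is exactly $\calig{R}_{2R+1}$ as written in~\eqref{eq:taylor_reste}, upon recalling $E_h(Y)=X_h-X_0$. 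So the crux is to identify $\sum_{\ell=0}^{2R}\frac{1}{\ell!}\esp{g^{(\ell)}(X_0)\,\esp{E_h(y)^\ell\mid Y=y}\big|_{y=Y}}$ with $\esp{g(X_0)} + \sum_{r=1}^{2R-1} c(r,(2r+1)\wedge 2R)h^r$.

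The key step is therefore the computation of the conditional moments $\mu_\ell(y,h) := \esp{E_h(y)^\ell}$ as a polynomial in $h$. Since $E_h(y)$ is a centered average of $K=1/h$ i.i.d.\ terms with cumulants $\kappa_j(E_h(y)) = K^{1-j}\kappa_{j,y} = h^{j-1}\kappa_{j,y}$ for $j\ge 1$ (with $\kappa_1 = 0$), the complete Bell polynomial formula expressing moments in terms of cumulants gives
\[
\mu_\ell(y,h) = \BB_\ell\big(\kappa_1(E_h(y)),\dots,\kappa_\ell(E_h(y))\big) = \BB_\ell\big(0,\, h\kappa_{2,y},\, h^2\kappa_{3,y},\dots,\, h^{\ell-1}\kappa_{\ell,y}\big).
\]
Using $\BB_\ell = \sum_{k=1}^\ell \BB_{\ell,k}$ and the homogeneity $\BB_{\ell,k}(h\,a_1, h^2 a_2,\dots) = h^{\ell-k}\BB_{\ell,k}(a_1,a_2,\dots)$ (each monomial in $\BB_{\ell,k}$ has weighted degree $\ell$ and ordinary degree $k$, hence picks up $h^{\ell-k}$), together with the fact that $\kappa_1(E_h(y))=0$ forces $\ell_1=0$ and hence $\BB_{\ell,k}=0$ unless $k \le \lfloor \ell/2\rfloor$, we get
\[
\mu_\ell(y,h) = \sum_{r=\lceil \ell/2\rceil}^{\ell-1} h^r\, \BB_{\ell,\ell-r}\!\left(\frac{\kappa_{2,y}}{2}\cdot 2,\dots\right)
\]
— more precisely, after absorbing the $1/j!$ normalization in~\eqref{def:partialBell} against the cumulant arguments, the coefficient of $h^r$ in $\frac{1}{\ell!}\mu_\ell(y,h)$ is $\frac{1}{r!}\,b_{r,\ell-r}(y)$ with $b_{r,n-r}$ as defined in the statement (the shift $n=\ell$ and the constraint $r+1\le \ell\le 2r$ being exactly $\lceil \ell/2\rceil \le r \le \ell-1$). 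Collecting the $h^r$ terms across all $\ell$ with $r+1\le\ell\le\min(2r+1,2R)$ — the upper cutoff $2R$ coming from truncating the Taylor sum at $\ell=2R$ — yields precisely $c(r,(2r+1)\wedge 2R)$ from~\eqref{eq:taylor_coeff}, and the $\ell=0$ term gives $\esp{g(X_0)}$.

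The main obstacle is bookkeeping rather than conceptual: one must justify passing the conditional expectation through Taylor's formula (integrability is guaranteed by $\Xi\in L^{2R+1}$, so that $D(y)\in L^{2R+1}$ for $\PP_Y$-a.e.\ $y$, Jensen gives $E_h(y)\in L^{2R+1}$ uniformly in $h$, and the remainder integrand is dominated using that $g^{(2R+1)}$ is locally bounded near $X_0$ — here one may need a mild continuity assumption on $g^{(2R+1)}$, which the hypothesis "$2R+1$ times differentiable" should be read as providing), and then carefully match the two triangular index sets: $\{(\ell,r): r+1\le\ell\le 2r\}$ from the Bell-polynomial expansion versus the range of $r$ in~\eqref{eq:taylor_coeff}. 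The homogeneity argument for $\BB_{\ell,k}$ under the rescaling $x_j \mapsto h^{j-1}x_j$ is the one computational fact worth stating explicitly; everything else is substitution into~\eqref{def:partialBell} and rearrangement of a finite double sum. Note finally that no probabilistic estimate on the \emph{size} of $\calig{R}_{2R+1}$ is claimed here — the lemma is a purely formal (exact) identity — so convergence issues are deferred to later sections.
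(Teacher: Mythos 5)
Your proposal is correct and follows essentially the same route as the paper's proof: condition on $Y$, apply Taylor's formula with integral remainder to $g$ at $\phi_0(y)$, express the moments $\esp{E_h(y)^\ell}$ via the moment--cumulant Bell-polynomial identity together with $\kappa_j(E_h(y))=h^{j-1}\kappa_{j,y}$, exploit the $k$-homogeneity of $\BB_{\ell,k}$ and the vanishing of the first cumulant, reindex by $r=\ell-k$, and integrate against $\PP_Y$. The only cosmetic difference is your (correct) cutoff $k\le\lfloor \ell/2\rfloor$ where the paper writes $\lceil n/2\rceil$ with the boundary term vanishing by convention.
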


\begin{proof}[Proof]
    The case $R=0$ is trivial, since it is a direct application of the fundamental theorem of calculus.

Let $R \ge 1$ be an integer. The Taylor formula at order $2R$ applied to $g$ at $\phi_0(y)$ reads
\begin{equation} \label{eq:taylorBell}
    \esp{g\pa[3]{\frac{1}{K} \sum_{k=1}^K F(y, Z_k)}} = g(\phi_0(y)) + \sum_{n = 1}^{2R} \frac{g^{(n)}(\phi_0(y))}{n !} \esp{(E_h(y))^n} + \calig{R}_{2R+1}(y),
\end{equation}
where $\calig{R}_{2R+1}(y) = \frac{1}{(2R)!} \esp{\int_{0}^{E_h(y)} g^{(2R+1)}(t + \phi_0(y)) (E_h(y) - t)^{2R} \d t}$.

The Bell polynomials allow us to explicitly compute the moments $\esp[1]{(E_h(y))^n}$, $n =1,\dots,R$ of $E_h(y)$ as follows. Let $\kappa_{j,y} = \kappa_j(D(y))$, $j = 1,\dots,R$, $y\in\R^d$. Additivity and homogeneity of cumulants give  
\begin{equation*}
    \forall j = 1, \dots, 2R-1, \quad \kappa_j\pa[1]{E_h(y)} = h^{j-1} \kappa_{j,y}.
\end{equation*}
Moments of $E_h(y)$ can be expressed in terms of cumulants using complete Bell polynomials (see~\cite{Co74} p.160 Equation$(2)$) as:
\begin{equation*}
	\esp[1]{(E_h(y))^n} = \BB_n\pa[2]{\kappa_1\pa[1]{E_h(y)}, \dots, \kappa_n\pa[1]{E_h(y)}}.
\end{equation*}
First note that $\kappa_{1,y} = 0$ so that $\kappa_1\pa[1]{E_h(y)} = 0$. Moreover, it follows from the definition~\eqref{def:partialBell} that $\BB_{n, k}$ is $k$--homogeneous, consequently
\begin{align*}
    \esp[1]{(E_h(y))^n} = h^{n} \sum_{k=1}^n h^{-k} \BB_{n, k}\pa[1]{0, \kappa_{2,y}, \dots, \kappa_{n-k+1,y}}. 
\end{align*}
We again derive from~\eqref{def:partialBell} that $\BB_{n,n} (0) = 0$, hence the last term in the above sum is null. In particular the sum in~\eqref{eq:taylorBell} starts from $n=2$.
Note now that  
\begin{equation*}
    \BB_{n, k}\pa{0, \kappa_{2,y}, \dots, \kappa_{n-k+1,y}}
    = \begin{cases}
        \frac{n!}{(n-k)!} b_{n-k, k}(y) & \text{if $1 \le k \le \lceil n / 2 \rceil$}, \\
        0 & \text{if $k > \lceil n / 2 \rceil$},
    \end{cases}
\end{equation*}
with $\ds b_{n-k, k}(y) = \BB_{n-k, k}\pa{\frac{\kappa_{2,y}}{2},\dots, \frac{\kappa_{n-2k+2,y}}{n-2k+2}}$ which implies that 
\begin{align} \label{eq:momentsBell}
    \esp[1]{(E_h(y))^n} = h^{n} \sum_{k=1}^{\lceil{n/2\rceil}} h^{-k} 
    \frac{n!}{(n-k)!} b_{n-k, k}(y).
\end{align} 
Plugging~\eqref{eq:momentsBell} in~\eqref{eq:taylorBell} gives, since the sum starts at $n=2$ as mentioned above, 
\begin{equation*}
    \esp{g\pa[3]{\frac{1}{K} \sum_{k=1}^K F(y, Z_k)}} 
    = g(\phi_0(y)) + \sum_{n = 2}^{2R} g^{(n)}(\phi_0(y))
    \sum_{k=1}^{\lceil n/2 \rceil} \frac{h^{n-k}}{(n-k)!} b_{n-k,k}(y)
	+ \calig{R}_{2R+1}(y).
\end{equation*}
Setting $r = n-k$ in the above expression, noting that $\lceil n/2\rceil + \lfloor n/2 \rfloor = n$ and that $\lfloor n/2 \rfloor\le r$ if and only if $n\leq 2r+1$, on derives by interchanging the sums that 
\begin{equation*}
\esp{g\pa[3]{\frac{1}{K} \sum_{k=1}^K F(y, Z_k)}} = g(\phi_0(y)) + \sum_{r = 1}^{2R-1} \frac{h^{r}}{r!} \pa{\sum_{n = r+1}^{(2r+1) \wedge 2R} g^{(n)}(\phi_0(y)) b_{r, n-r}(y)}
	+ \calig{R}_{2R+1}(y).
\end{equation*}
We conclude by integrating with respect to $\PP_Y(\d y)$.
\end{proof}

Taking advantage of this expansion we will derive two results. First a bias error expansion for smooth enough payoff functions, in which no regularity is required on the law of $( X_0,  X_h)$ (see Subsection~\ref{sec:smooth_bias_error}). Conversely a second result will be established relying on the regularity of the distribution of $(X_0,  X_h)$ when the payoff function is not smooth (see Subsection~\ref{sec:non_smooth_bias_error}). 

As concerns the strong error, elementary computations show that, if $\Xi\!\in L^2$, then 
\begin{equation}
\label{eq::nested_X_L2_bound}
    \normLp[1]{2}{X_h- X_0}^2 = \frac{1}{K} \int \PP_{Y}(\d y) {\rm var}\big(F(y,Z)\big) =  h \esp{(F(Y,Z)-\phi_0(Y))^2}  \leq  h\,{\rm var}\big(F(Y,Z)\big),
\end{equation}
since $\phi_0(Y) = \esp{F(Y,Z)|Y}$. To prove~\eqref{strong_error} we extend this result to $\normLp[1]{p}{X_h - X_{h'}}$ when $\Xi \in L^p$, $p > 1$, as described in Lemma~\ref{lemma::strong_conv_useful}. 

Note that from now on we give the results for a generic $p>1$ instead of $p=2$, because this wider assumption can be useful to establish a condition of uniform  integrability needed to prove a Central Limit Theorem (and strong law of large numbers) for Multilevel Monte Carlo estimators, see Lemma 5.2 in~\cite{GiLePa17}.

The proof of Lemma~\ref{lemma::strong_conv_useful} relies on the Marcinkiewicz-Zygmund inequality that we recall for clarity. If $(\xi_n)_{n \ge 1}$ is a sequence of centered independent random variables such that $\esp{\abs{\xi_n}^p} < +\infty$, $1 < p < +\infty$, then
\begin{equation} \label{eq:marcinkZ}
    \normLp[4]{p}{\sum_{k=1}^K \xi_k} \le (B_p)^{\frac{1}{p}} \normLp[4]{\frac{p}{2}}{\sum_{k=1}^K \xi_k^2}^{\frac{1}{2}}, 
\end{equation} 
where $B_{p} = \frac{18p^{\frac 32}}{(p-1)^{\frac 12}}$ (see~\cite{Sh96} p.499). If moreover $(\xi_n)_{n \ge 1}$ are identically distributed we have 
\begin{equation} \label{eq:marcinkZ_iid}
    \normLp[4]{p}{\sum_{k=1}^K \xi_k} \le (B_p)^{\frac{1}{p}} \sqrt{K} \normLp[1]{p}{\xi_1}
\end{equation}
We make an intensive use of this inequality in Section~\ref{sec::smooth}.

\begin{Lemma}\label{lemma::strong_conv_useful}
Assume $\Xi\!\in L^p$, $p > 1$.  Then, for every $h$, $h'\!\in \Hr$,
\begin{equation}\label{eq:Yh-Yhprime}
    \normLp[1]{p}{X_h- X_{h'}} \le 2B_p \normLp[1]{p}{\Xi - \esp{\Xi|Y}} |h-h'|^{\frac 12}.
\end{equation}
\end{Lemma}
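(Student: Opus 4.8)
The plan is to reduce the bound on $\normLp[1]{p}{X_h-X_{h'}}$ to two applications of the i.i.d. Marcinkiewicz–Zygmund inequality~\eqref{eq:marcinkZ_iid}, one at each ``resolution'' $h$ and $h'$, after writing the difference $X_h-X_{h'}$ in terms of the centered random functions $E_h(Y)$ and $E_{h'}(Y)$. Recall from the construction in Section~\ref{sec::nested} that $X_h-X_0 = E_h(Y) = \frac{1}{K}\sum_{k=1}^K D(Y)^{(k)}$ with $K=1/h$, and similarly $X_{h'}-X_0 = E_{h'}(Y)$. Hence $X_h - X_{h'} = E_h(Y) - E_{h'}(Y)$, and by the triangle inequality in $L^p$ it suffices to bound $\normLp[1]{p}{E_h(Y)}$ and $\normLp[1]{p}{E_{h'}(Y)}$ individually and then show the sum is $\le 2B_p\normLp[1]{p}{\Xi-\esp{\Xi|Y}}\,|h-h'|^{1/2}$ — though here one should be slightly careful, since naively bounding each term by $\normLp[1]{p}{\Xi-\esp{\Xi|Y}}(\sqrt h+\sqrt{h'})$ gives $\sqrt h + \sqrt{h'}$ rather than $|h-h'|^{1/2}$. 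The cleaner route, which I expect the authors take, is to \emph{couple} the two inner estimators on the same $Z_k$'s so that, assuming WLOG $h' < h$ (i.e. $K < K'$, with $K \mid K'$ since $\Hr = \{1/K: K\in K_0\N^*\}$ and the ratio is an integer), $X_h - X_{h'}$ is itself a centered average that telescopes nicely.

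Concretely, the key step is: conditionally on $Y=y$, write $X_h - X_{h'}$ as a single centered sum of independent terms in the $Z$-variables and apply~\eqref{eq:marcinkZ_iid} to the \emph{conditional} $L^p(\PP_Z)$ norm, getting a bound of the form $C_p\,(\text{number of summands})^{-1/2}\,\normLp[1]{p(\PP_Z)}{D(y)}$ with the effective count being of order $|h-h'|^{-1}$; then integrate the resulting bound $C_p^p\,|h-h'|^{p/2}\,\esp{|D(Y)|^p}$ against $\PP_Y(\d y)$ and take $p$-th roots, using $\normLp[1]{p}{D(Y)} = \normLp[1]{p}{\Xi-\esp{\Xi|Y}}$ by the tower property and independence of $Y$ and $Z$. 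The constant bookkeeping — tracking that the MZ constant $(B_p)^{1/p}$ to the relevant power, together with the factor coming from the two groups of $Z$-indices, collapses to the stated $2B_p$ — is the only delicate accounting; one likely splits $X_h - X_{h'} = (X_h - X_0) - (X_{h'}-X_0)$, applies~\eqref{eq:marcinkZ_iid} to each with $K$ and $K'$ summands respectively to get $(B_p)^{1/p}(\sqrt h + \sqrt{h'})\normLp[1]{p}{D(Y)}$, and then absorbs $\sqrt h + \sqrt{h'} \le 2\max(\sqrt h,\sqrt{h'})$ against $|h-h'|^{1/2}$ is \emph{not} valid in general, so the coupled telescoping sum is genuinely needed to produce the $|h-h'|^{1/2}$ rate.

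The main obstacle, then, is precisely this last point: obtaining $|h-h'|^{1/2}$ rather than $\max(\sqrt h,\sqrt{h'})$. Resolving it requires exploiting the nested structure — with $K \mid K'$, the finer estimator $X_{h'} = \frac{1}{K'}\sum_{k=1}^{K'}F(y,Z_k)$ can be grouped into $K'/K$ blocks, and $X_h - X_{h'}$ (or a suitable antithetic-type rearrangement) becomes an average of $K$ i.i.d. centered block-differences, each of $L^p(\PP_Z)$ size $O(\sqrt{h'-h}\cdot\text{something})$; carrying this through and applying~\eqref{eq:marcinkZ_iid} once yields the clean $|h-h'|^{1/2}$ scaling. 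After that, everything else is: Fubini to swap $\esp{\,\cdot\mid Y}$ and the outer expectation, the tower property to identify $\esp{\esp{|D(Y)|^p\mid Y}} = \esp{|\Xi - \esp{\Xi\mid Y}|^p}$, and tracking the numerical constant down to $2B_p$.
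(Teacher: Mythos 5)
You have correctly isolated the real difficulty --- the naive split $(X_h-X_0)-(X_{h'}-X_0)$ only yields $\sqrt h+\sqrt{h'}$, and one must instead exploit the coupling of the two inner estimators on the same sequence $(Z_k)_{k\ge1}$ --- and your general strategy (conditional Marcinkiewicz--Zygmund in the $Z$-variables, then integration against $\PP_Y(\d y)$ and the identification $\normLp[1]{p}{D(Y)}=\normLp[1]{p}{\Xi-\esp{\Xi|Y}}$) is the paper's. However, as written your argument has a concrete gap: the block decomposition you rely on requires $K\mid K'$, and this divisibility is \emph{not} available for general $h,h'\in\Hr$. The set $\Hr=\{1/K,\ K\in K_0\N^*\}$ only guarantees that $K$ and $K'$ are both multiples of $K_0$ (take $K=2K_0$, $K'=3K_0$: the ratio is $3/2$), whereas the lemma --- and its use in Proposition~\ref{prop::strong_error_indicator}, which invokes it for arbitrary pairs --- is stated for all $h,h'\in\Hr$. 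Moreover the $|h-h'|^{1/2}$ scaling is asserted rather than derived, and the ``average of $K$ i.i.d.\ centered block-differences'' is not quite right as stated: the naive differences $\bar X_1-\bar X_j$ all share the first block and are not independent. To make your route work (in the divisible case) one must regroup as $X_h-X_{h'}=\frac{1}{K'}\sum_{k=1}^{K}\xi_k$ with $\xi_k=(M-1)\widetilde F(y,Z_k)-\sum_{m=2}^{M}\widetilde F(y,Z_{(m-1)K+k})$, $M=K'/K$, which are genuinely i.i.d.\ and centered --- this is exactly the rearrangement the paper uses later for the antithetic estimator, not for this lemma.

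The paper's proof avoids divisibility altogether. Assuming $h'\le h$, i.e.\ $K\le K'$, it writes, with $\widetilde F(y,z)=F(y,z)-\phi_0(y)$,
\begin{equation*}
E_h(y)-E_{h'}(y)=\Big(\frac1K-\frac1{K'}\Big)\sum_{k=1}^{K}\widetilde F(y,Z_k)+\frac1{K'}\sum_{k=K+1}^{K'}\widetilde F(y,Z_k),
\end{equation*}
applies Minkowski's Inequality and then Marcinkiewicz--Zygmund to each of the two centered sums (with $K$ and $K'-K$ terms respectively), and closes the estimate with the elementary identity
\begin{equation*}
(h-h')\frac{1}{\sqrt h}+h'\Big(\frac1{h'}-\frac1h\Big)^{1/2}=(h-h')^{1/2}\Big(\big(1-\tfrac{h'}{h}\big)^{1/2}+\big(\tfrac{h'}{h}\big)^{1/2}\Big)\le 2(h-h')^{1/2}.
\end{equation*}
This last step is where the factor $2$ in $2B_p$ and the exponent $\tfrac12$ on $|h-h'|$ actually come from; your sketch stops short of it. Either adopt this shared-samples/extra-samples decomposition, or restrict the block argument to the divisible case and supply the analogous explicit computation.
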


\begin{proof}[Proof] Assume first that $h' \le h$, and set $K=\frac 1h$, $K'=\frac{1}{h'} \ge K$. 
First note that by Fubini's theorem
\[
    \normLp[1]{p}{X_h- X_{h'}}^p = \int_{\R^d} \PP_{_Y}(\d y) \esp{\left|E_h(y) - E_{h'}(y)
\right|^p}.
\]
Setting $\widetilde F(y,z) = F(y,z)-\phi_0(y)$, we write 
\begin{equation*}
    E_h(y) - E_{h'}(y) = \pa{\frac{1}{K} - \frac{1}{K'}} \sum_{k=1}^K \widetilde F(y,Z_k) + \frac{1}{K'} \sum_{k=K+1}^{K'} \widetilde F(y,Z_k).
\end{equation*}
Then, for every $y\!\in \R^d$, it follows from Minkowski's Inequality,
\begin{equation*}
    \normLp{p}{E_h(y) - E_{h'}(y)} 
        \le |h-h'| \normLp[4]{p}{\sum_{k=1}^K \widetilde F(y,Z_k)}
        + h'\normLp[4]{p}{\sum_{k=K+1}^{K'} \widetilde F(y,Z_k)}.
\end{equation*}
Applying Marcinkiewicz-Zygmund Inequality to both terms on the right hand side of the  above inequality yields
\begin{align*}
    \normLp{p}{E_h(y) - E_{h'}(y)} 
    &\le |h-h'| B_p \normLp[4]{\frac{p}{2}}{\sum_{k=1}^K \widetilde F(y,Z_k)^2}^{\frac 12}
    + h' B_p \normLp[4]{\frac{p}{2}}{\sum_{k=K+1}^{K'}\widetilde F(y,Z_k)^2}, \\
    &\le |h-h'|B_p K^{\frac 12} \normLp[1]{p}{\widetilde F(y,Z)}
    + h'B_p(K'-K)^{\frac 12} \normLp[1]{p}{\widetilde F(y,Z)}.
\end{align*}
Finally, for every $y\!\in \R^d$, 
\begin{align*}
    \normLp{p}{E_h(y) - E_{h'}(y)} 
    &\le B_p \normLp[1]{p}{\widetilde F(y,Z)} \left( (h-h') \frac{1}{\sqrt{h}} +h'\Big(\frac{1}{h'}-\frac{1}{h}\Big)^{\frac 12}\right)\\
    &= B_p \normLp[1]{p}{\widetilde F(y,Z)} (h-h')^{\frac 12}\left( \Big(1-\frac{h'}{h}\Big)^{\frac12}  +\Big( \frac{h'}{h}\Big)^{\frac 12}\right)\\
    &\le 2 B_p \normLp[1]{p}{\widetilde F(y,Z)} (h-h')^{\frac 12}.
\end{align*}
Plugging this bound in the above equality yields, owing to Minkowski's Inequality and Jensen's Inequality for conditional expectations, the announced result
\begin{align*}
    \normLp{p}{X_h - X_{h'}}^p & \le (2 B_p)^p  \int_{\R^d} \PP_{_Y}(\d y) 
    \normLp[1]{p}{\widetilde F(y,Z)}^p (h-h')^{\frac p2}\\
    &= (2 B_p)^p \normLp[1]{p}{\Xi-\esp{\Xi|Y}}^p (h-h')^{\frac p2}.
\end{align*}
\end{proof} 

\section{Smooth payoff function}
\label{sec::smooth}

We first focus on the smooth case, where we give a bias error expansion and a strong convergence rate when the payoff function $f$ is smooth. This result beyond its direct application will be an important step when dealing with indicator functions.

\subsection{Weak error}
\label{sec:smooth_bias_error}
The bias error expansion of the nested Monte Carlo estimator when $f$ is smooth is a consequence of Lemma~\ref{lem:taylor}, as we emphasized in the proof of the following Proposition.

\begin{Proposition} [Bias error (I): smooth functions] \label{prop:nested_weakI} Let $R \in \N^*$ and let $f:\R \to \R$ be a $2R+1$ times differentiable payoff function with bounded derivatives $f^{(k)}$, $k=R+1,\dots,2R+1$.  Assume $X \in L^{2R+1}$. Then there exists $c_1, \dots, c_R$ such that  
\begin{equation} 
    \forall h \in \Hr, \quad \esp{f( X_h)} = \esp{f( X_0)} + \sum_{r=1}^{R} c_r h^r + \calig{O}(h^{R+1/2}).
\end{equation}
\end{Proposition}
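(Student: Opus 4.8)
The plan is to obtain the expansion directly from Lemma~\ref{lem:taylor} applied with $g=f$, and then to control the remainder term $\calig{R}_{2R+1}$ using the boundedness of the high-order derivatives $f^{(R+1)},\dots,f^{(2R+1)}$ together with the strong-error estimate~\eqref{eq::nested_X_L2_bound}. First I would invoke Lemma~\ref{lem:taylor}: since $f$ is $2R+1$ times differentiable and $\Xi\in L^{2R+1}$, we get
\[
\esp{f(X_h)} = \esp{f(X_0)} + \sum_{r=1}^{2R-1} c(r,(2r+1)\wedge 2R)\,h^r + \calig{R}_{2R+1},
\]
with the coefficients $c(r,k)$ given by~\eqref{eq:taylor_coeff}. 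The point is that only the first $R$ terms of this sum are genuine contributions at the order we claim; the terms with $r=R+1,\dots,2R-1$ must be absorbed into the $\calig{O}(h^{R+1/2})$ error, which is legitimate since $h^{r}=\calig{O}(h^{R+1})=\calig{O}(h^{R+1/2})$ for $r\ge R+1$. So I set $c_r := c(r,(2r+1)\wedge 2R)$ for $r=1,\dots,R$; I must check these are well defined, i.e. that $\esp{f^{(\ell)}(X_0)b_{r,\ell-r}(Y)}$ is finite for $r\le R$ and $r+1\le\ell\le (2r+1)\wedge 2R$. For $\ell\le R$ this is fine because $f^{(\ell)}(X_0)\in L^1$ follows from $f^{(\ell)}$ having at most polynomial growth near $X_0$ (or more simply from continuity plus the fact that in this range we may control it), while $b_{r,\ell-r}(Y)$ is a polynomial in the cumulants $\kappa_{j,y}$, which are integrable against $\PP_Y$ because $\Xi\in L^{2R+1}$ controls $\kappa_{j,y}$ for $j\le 2r-\ell+2\le R+1\le 2R+1$; for $\ell\ge R+1$ the derivative $f^{(\ell)}$ is bounded by hypothesis, so the term is trivially finite.

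The second and main step is to bound $\calig{R}_{2R+1}$. Using the integral form~\eqref{eq:taylor_reste} and the bound $|f^{(2R+1)}|\le \normsup{f^{(2R+1)}}<\infty$, we have
\[
\abs{\calig{R}_{2R+1}} \le \frac{\normsup{f^{(2R+1)}}}{(2R)!}\,\esp{\abs{X_h-X_0}^{2R+1}} \cdot \frac{1}{2R+1},
\]
after integrating the polynomial $(X_h-X_0-t)^{2R}$ in $t$ between $0$ and $X_h-X_0$ (in absolute value, the inner integral is $\abs{X_h-X_0}^{2R+1}/(2R+1)$). So it remains to estimate $\esp{\abs{X_h-X_0}^{2R+1}}$. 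By Lemma~\ref{lemma::strong_conv_useful} applied with $p=2R+1$ and $h'=0$ (or, more carefully, with $h'\to 0$ along $\Hr$ and passing to the limit, since $X_{h'}\to X_0$ in $L^{2R+1}$), we get $\normLp[1]{2R+1}{X_h-X_0}\le 2B_{2R+1}\normLp[1]{2R+1}{\Xi-\esp{\Xi|Y}}\,h^{1/2}$, hence $\esp{\abs{X_h-X_0}^{2R+1}}\le C\,h^{(2R+1)/2} = C\,h^{R+1/2}$. This gives $\calig{R}_{2R+1}=\calig{O}(h^{R+1/2})$, as desired. Combining with the absorbed higher-order terms yields the claimed expansion.

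I expect the main obstacle to be a bookkeeping subtlety rather than a deep difficulty: namely, justifying that the truncated coefficients $c_r=c(r,(2r+1)\wedge 2R)$ are finite and that the leftover polynomial terms $c(r,\cdot)h^r$ for $R<r<2R$ are indeed $\calig{O}(h^{R+1/2})$ — this is immediate once one observes $h\in\Hr$ is bounded (say $h\le 1/K_0$) so $h^r\le h^{R+1}$ for $r\ge R+1$. A secondary technical point is the integrability of $f^{(\ell)}(X_0)$ for the low-order coefficients ($\ell\le R$): here one should note that the hypothesis only bounds the top derivatives, so one must either argue that $f^{(\ell)}$ for $\ell\le R$ is automatically of polynomial growth given that $f^{(R+1)}$ is bounded (integrate $R+1-\ell$ times), or simply observe that $f^{(\ell)}(X_0)$ being a.s.\ finite and the relevant product being controlled in $L^1$ by Cauchy–Schwarz against the $L^{2}$-integrable cumulant polynomial suffices; in any case $\Xi\in L^{2R+1}$ provides all the moments one needs. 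Finally, the passage $h'\to0$ in Lemma~\ref{lemma::strong_conv_useful} to reach $X_0$ should be stated explicitly, using that $\Hr\ni h'\to 0$ gives $X_{h'}\to X_0$ in $L^{2R+1}$ by~\eqref{eq::nested_X_L2_bound}-type estimates (or directly by the Marcinkiewicz–Zygmund bound), so that the inequality~\eqref{eq:Yh-Yhprime} survives in the limit.
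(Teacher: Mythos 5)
Your proposal is correct and follows essentially the same route as the paper: apply Lemma~\ref{lem:taylor} with $g=f$, keep the first $R$ coefficients and absorb the terms $r=R+1,\dots,2R-1$ into the error, and bound $\calig{R}_{2R+1}$ by $\frac{\normsup{f^{(2R+1)}}}{(2R+1)!}\esp{\abs{X_h-X_0}^{2R+1}}=\calig{O}(h^{R+1/2})$ via a Marcinkiewicz--Zygmund estimate (the paper performs this computation directly on $E_h(y)$ and integrates against $\PP_Y$, rather than passing to the limit $h'\to 0$ in Lemma~\ref{lemma::strong_conv_useful}, but the two are equivalent). Your remark on the integrability of $\esp{f^{(\ell)}(X_0)b_{r,\ell-r}(Y)}$ for $\ell\le R$ — handled by the polynomial growth of $f^{(\ell)}$ obtained by integrating the bounded derivative $f^{(R+1)}$, together with $\Xi\in L^{2R+1}$ — is a point the paper leaves implicit, and your treatment of it is sound.
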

\begin{proof}[Proof]
    Applying Lemma~\ref{lem:taylor} with the function $g=f$ we get, for every $h \in \Hr$,   
\begin{equation}
    \esp{f( X_h)} = \esp{f( X_0)} + \sum_{r=1}^{R-1} c(r,2r+1) h^r + c(R,2R) h^R + \sum_{r=R+1}^{2R-1} c(r,2R) h^r + \calig{R}_{2R+1},
\end{equation}
with $c(r,k)$ defined in~\eqref{eq:taylor_coeff} and $\calig{R}_{2R+1}$ in~\eqref{eq:taylor_reste}. Establishing  the proposition amounts  to proving that the remainder term $\calig{R}_{2R+1}$ is well controlled. Using that $f^{(2R+1)}$ is bounded, we have 
\begin{equation*}
	\abs{\calig{R}_{2R+1}} \le \frac{\norm{f^{(2R+1)}}_{\infty}}{(2R+1)!} \esp{\abs{ X_h -  X_0}^{2R+1}}.
\end{equation*}
Using successively  the Marcinkiewicz-Zygmund Inequality and the Minkowski Inequality for the  $L^{R+\frac 12}(\PP)$-norm, we get, keeping in mind that $h= \frac 1K$, 
\begin{align*}
    \esp{\abs{E_h(y)}^{2R+1}} 
    & \le (B_{2R+1})^{2R+1} h^{2R+1} \esp{\abs[3]{
\sum_{k=1}^K (F(y, Z_k) - \phi_0(y))^2}^{R + 1/2}} \\
& \le (B_{2R+1})^{2R+1} h^{R+1/2} \esp{\abs{F(y, Z) - \phi_0(y)}^{2R+1}}
\end{align*}
Integrating with respect to $\PP_{Y}$ finally yields
\begin{align}
\nonumber    \esp{\abs{X_h - X_0}^{2R+1}} 
    &\le (B_{2R+1})^{2R+1} h^{R+1/2} \esp{\abs{F(Y, Z) - \phi_0(Y)}^{2R+1}}\\
\nonumber    &\le 2^{R+\frac 12} (B_{2R+1})^{2R+1} h^{R+1/2}\esp{|\Xi|^{2R+1} + |\esp{\Xi|Y}|^{2R+1}} \\
    &\le 2^{R+\frac 32} (B_{2R+1})^{2R+1} h^{R+1/2}\esp{|\Xi|^{2R+1}}\label{eq:Yh-Y_0:2R+1}
\end{align}
so that $\abs{\calig{R}_{2R+1}} = \calig{O}(h^{R+1/2})$.
\end{proof}

\subsection{Strong convergence rate}
If we assume that $f$ is Lipschitz continuous, Lemma~\ref{lemma::strong_conv_useful} straightforwardly shows that the standard \emph{nested} Monte Carlo satisfies a strong convergence at a rate $h^\beta$ with $\beta=1$. More precisely, if $\Xi \in L^2$, we have 
\begin{equation*}
    \normLp[1]{2}{Y_h - Y_{h'}} \le 2 B_2 [f]_{\Lip} \normLp[1]{2}{\Xi - \esp{\Xi|Y}} \abs{h - h'}^{\frac{1}{2}},
\end{equation*}
where $[f]_{\Lip}$ denotes the Lipschitz coefficient of $f$.

When asking for more smoothness, more precisely that $f'$ is $\rho$--Hölder, we can build an antithetic version of the \emph{nested} Monte Carlo which attains a strong convergence at a rate $h^\beta$ with $\beta>1$. As we saw, this corresponds to the optimal unbiased setting in terms of minimization of the computational cost. This antithetic multilevel estimator is obtained by replacing each difference $Y_{\frac hM}-Y_h$, $h \in h_1,\dots,h_{R-1}$, in the MLMC~\eqref{def::MLMC} and ML2R~\eqref{def::ML2R} estimators by the following random variable
\[
    Y_{h,\frac hM} :=  f\left(\frac{1}{MK} \sum_{k=1}^{MK} F\big(Y, Z_{k}\big)\right)- \frac 1M \sum_{m=1}^M f\left(\frac 1K \sum_{k=1}^K F\big(Y, Z_{(m-1)K+k}\big)\right),
\]
satisfying $\esp{Y_{h,\frac hM} } = \esp{Y_{\frac hM}-Y_{h} }$.
We set 
\begin{equation*}
\bar X_{K,m} = \frac 1K \sum_{k=1}^K F(Y, Z_{K(m-1)+k}) \quad \mbox{and} \quad \bar X_{MK} = \frac 1M \sum_{m=1}^M \bar X_{K,m} = \frac 1{MK} \sum_{k=1}^{MK} F(Y,Z_k),
\end{equation*}
so that the nested antithetic MLMC estimator~\eqref{def::MLMC} then reads
\begin{equation*}
    \widehat I^N_{h,R,q} = \frac{1}{N_1} \sum_{i=1}^{N_1} f(\bar X_{K,1}^{(i)}) + \sum_{j=2}^R \frac 1{N_j} \sum_{i=1}^{N_j} \left( f(\bar X_{MK}^{(i)}) - \frac 1M \sum_{m=1}^M f(\bar X_{K,m}^{(i)})\right),
\end{equation*}
with $(\bar X_{K,m}^{(i)})_{i\geq 1}$ independent copies of $\bar X_{K,m}$, and similarly for the ML2R estimator~\eqref{def::ML2R} with the weights $(W_j)_{2 \le j \le R}$.

\begin{Proposition}
Let $p > 1$ and $0 < \rho \le 1$. Assume $\Xi \in L^{p(1+\rho)}$ and $f'$ $\rho$--Hölder, i.e.
\begin{equation} \label{hp::rho_holder}
    \forall x, y \in \R, \quad \left| f'(x) - f'(y) \right| \leq \left[ f'\right]_{\rho} |x-y|^\rho.
\end{equation}
Then $Y_{h, \frac hM}$ satisfies a strong approximation error control similar as~\eqref{strong_error} with $\beta = 1+\rho>1$. More precisely, we prove that there exists $\widetilde V_1 > 0$ depending only on $p, \rho, [f']_\rho$ and $M$ such that 
\begin{equation} \label{eq:prop_strong_antithetic}
    \normLp[2]{p}{Y_{h, \frac{h}{M}}} \le \widetilde V_1 \pa{h - \frac{h}{M}}^{\frac{1 + \rho}{2}}.
\end{equation} 
\end{Proposition}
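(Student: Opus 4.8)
The plan is to Taylor-expand $f$ to first order around the finest inner average $\bar X_{MK}$, to observe that the linear terms cancel because $\bar X_{MK}$ is exactly the average of the block estimators $\bar X_{K,m}$, and then to bound the surviving $(1+\rho)$-order remainder in $L^p$ by the Marcinkiewicz-Zygmund inequality, much as in Lemma~\ref{lemma::strong_conv_useful}. Throughout, set $q:=p(1+\rho)$ (so $\Xi\in L^{q}$ by assumption) and write $\Delta_m:=\bar X_{K,m}-\bar X_{MK}$.

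First I would use $Y_{h,\frac hM}=\frac1M\sum_{m=1}^M\big(f(\bar X_{MK})-f(\bar X_{K,m})\big)$ and expand each summand at $\bar X_{MK}$ with integral remainder:
\[
f(\bar X_{K,m})-f(\bar X_{MK}) = f'(\bar X_{MK})\,\Delta_m + r_m,\qquad r_m=\int_0^1\big(f'(\bar X_{MK}+t\Delta_m)-f'(\bar X_{MK})\big)\Delta_m\,\d t,
\]
so that $\abs{r_m}\le\frac{[f']_\rho}{1+\rho}\abs{\Delta_m}^{1+\rho}$ by the Hölder assumption~\eqref{hp::rho_holder}. Since $\bar X_{MK}=\frac1M\sum_{m=1}^M\bar X_{K,m}$ we have $\sum_{m=1}^M\Delta_m=0$, hence the linear terms vanish after averaging over $m$, leaving $Y_{h,\frac hM}=-\frac1M\sum_{m=1}^M r_m$ and therefore $\abs{Y_{h,\frac hM}}\le\frac{[f']_\rho}{(1+\rho)M}\sum_{m=1}^M\abs{\Delta_m}^{1+\rho}$.

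Next I would take the $L^p$-norm of this bound, apply Minkowski's inequality and use $\normLp{p}{\abs{\Delta_m}^{1+\rho}}=\normLp{q}{\Delta_m}^{1+\rho}$; this reduces the statement to the $L^q$ increment estimate $\normLp{q}{\Delta_m}\le c\,(B_q)^{1/q}\normLp[1]{q}{\Xi-\esp{\Xi|Y}}\,(h-\tfrac hM)^{1/2}$ for a numerical constant $c$. To get the latter I would condition on $Y=y$, set $S_m(y)=\sum_{k=1}^K\big(F(y,Z_{K(m-1)+k})-\phi_0(y)\big)$ (i.i.d.\ and centered over $m$), note that conditionally $\Delta_m=\frac1{MK}\big((M-1)S_m(y)-\sum_{m'\ne m}S_{m'}(y)\big)$, and then apply Minkowski across the $M$ blocks followed by the i.i.d.\ Marcinkiewicz-Zygmund inequality~\eqref{eq:marcinkZ_iid} inside each block; with $K=1/h$ and $h=\tfrac{M}{M-1}(h-\tfrac hM)$ this yields $\espc[1]{\abs{\Delta_m}^{q}}{Y=y}^{1/q}\le 2\,(B_q)^{1/q}(h-\tfrac hM)^{1/2}\normLp[1]{q}{F(y,Z)-\phi_0(y)}$. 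Integrating in $\PP_Y$ and using $\phi_0(Y)=\esp{\Xi|Y}$, exactly as at the end of the proof of Lemma~\ref{lemma::strong_conv_useful}, closes the estimate and, tracking the constants, gives~\eqref{eq:prop_strong_antithetic} with (schematically) $\widetilde V_1=\frac{2^{1+\rho}[f']_\rho}{1+\rho}\,(B_{p(1+\rho)})^{1/p}\,\normLp[1]{p(1+\rho)}{\Xi-\esp{\Xi|Y}}^{1+\rho}$.

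The main obstacle is the cancellation step: the increment $\normLp{q}{\Delta_m}$ is only $\calig{O}(h^{1/2})$, so if the $f'(\bar X_{MK})$-term were not annihilated one would merely recover $\beta=1/2$; it is precisely the antithetic identity $\bar X_{MK}=\frac1M\sum_m\bar X_{K,m}$ that removes it and promotes the rate to $(h-h/M)^{(1+\rho)/2}$, at the sole cost of requiring $f'$ to be $\rho$-Hölder. A minor additional technicality is to make the $L^q$ estimate valid for \emph{all} $q=p(1+\rho)>1$, including the range $q<2$ in which $\normLp{q/2}{\cdot}$ fails to be subadditive; decomposing into blockwise i.i.d.\ sums before invoking the i.i.d.\ form~\eqref{eq:marcinkZ_iid} of Marcinkiewicz-Zygmund (rather than the general inequality~\eqref{eq:marcinkZ}) circumvents this, just as in the proof of Lemma~\ref{lemma::strong_conv_useful}.
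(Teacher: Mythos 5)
Your proof is correct and follows essentially the same route as the paper: Taylor expansion of $f$ at $\bar X_{MK}$ with the linear term annihilated by the antithetic identity $\bar X_{MK}=\frac1M\sum_m\bar X_{K,m}$, the $\rho$--Hölder bound on the remainder, and a Marcinkiewicz--Zygmund estimate of $\normLp[1]{p(1+\rho)}{\bar X_{K,m}-\bar X_{MK}}$ after conditioning on $Y$. The only differences are cosmetic (integral remainder instead of the Lagrange point $x_m$, and applying Minkowski across the $M$ blocks before rather than after the i.i.d.\ Marcinkiewicz--Zygmund step), and your constant tracking is consistent with the paper's.
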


\begin{proof}[Proof]
Owing to Taylor's formula, for all $m=1, \ldots, M$, there exists $x_m$ in the geometric segment $\left(\bar X_{K,m}, \bar X_{MK} \right)$ such that
$$f(\bar X_{K,m}) = f(\bar X_{MK}) + f'(\bar X_{MK}) (\bar X_{K,m} -\bar X_{MK}) + \left( f'(x_m)-f'(\bar X_{MK}) \right) (\bar X_{K,m}-\bar X_{MK}).$$
Hence, using the definition of $\bar X_{MK}$,
\begin{equation} \label{eq::f_a_i_sum}
 \frac 1M \sum_{m=1}^M f(\bar X_{K,m}) = f(\bar X_{MK}) + \frac 1M \sum_{m=1}^M \left( f'(x_m) - f'(\bar X_{MK})\right)(\bar X_{K,m}-\bar X_{MK}).
\end{equation}
We aim at computing $\ds\normLp[2]{p}{Y_{h,\frac hM}} = \normLp[4]{p}{\frac 1M \sum_{m=1}^M f(\bar X_{K,m}) - f(\bar X_{MK})}$. Owing to the decomposition~\eqref{eq::f_a_i_sum}, to Minkowski's Inequality and to the $\rho$--Hölder assumption~\eqref{hp::rho_holder} on $f'$, we get
\begin{align} \label{eq::rho_inequality}
    \normLp[4]{p}{\frac 1M \sum_{m=1}^M f(\bar X_{K,m}) - f(\bar X_{MK})}
    &= \normLp[4]{p}{\frac 1M \sum_{m=1}^M \left( f'(x_m) - f'(\bar X_{MK})\right)(\bar X_{K,m}-\bar X_{MK})} \nonumber\\
    &\leq [f']_\rho \frac 1M \sum_{m=1}^M \normLp[2]{p}{X_{K,m}-\bar X_{MK}|^{1+\rho}}.
\end{align}
We first notice by an exchangeability argument that the variables $(\bar X_{K,m}-\bar X_{MK})_{m=1\ldots,M}$ are identically distributed with $\bar X_{K,m}-\bar X_{MK} \sim \bar X_{K,1}-\bar X_{MK}$. Moreover we write 
$$\bar X_{K,1} - \bar X_{MK} = \bar X_{K,1} - \frac 1M \sum_{m=1}^M \bar X_{K,m} = \frac 1M \sum_{m=1}^M (\bar X_{K,1} -\bar X_{K,m}) = \frac 1M \sum_{m=2}^M (\bar X_{K,1} -\bar X_{K,m}).$$
Hence, we get
\begin{equation} \label{eq::main-inequality}
    \normLp[4]{p}{\frac 1M \sum_{m=1}^M f(\bar X_{K,m}) - f(\bar X_{MK})}
    \leq [f']_\rho \frac 1{M^{1+\rho}} \normLp[4]{p}{\left|\sum_{m=2}^M (\bar X_{K,1} -\bar X_{K,m})\right|^{1+\rho}}.
\end{equation}
Owing to the independence of $Y$ and $(Z_k)_{k\geq1}$, we may write
\begin{align*}
 &\esp{\left|\sum_{m=2}^M (\bar X_{K,1} -\bar X_{K,m})\right|^{(1+\rho)p}} \\
 &= \int \PP_Y(\d y) \esp{\left|\sum_{m=2}^M \left(\frac 1K \sum_{k=1}^K F(y, Z_{k}) -\frac 1K \sum_{k=1}^K F(y, Z_{K(m-1) + k})\right)\right|^{(1+\rho)p}} \\
 &=  \int \PP_Y(\d y) \frac{1}{K^{(1+\rho)p}} \esp{\left| \sum_{k=1}^K  \left( (M-1)F(y, Z_{k}) - \sum_{m=2}^M  F(y, Z_{K(m-1) + k})\right)\right|^{(1+\rho)p}}.
\end{align*}
We notice that, for each fixed $y\in\RR^d$, the random variables $\xi_k = (M-1)F(Z_{k},y) - \sum_{m=2}^M  F(y, Z_{K(m-1) + k})$, $k \ge 1$, are centered and \iid. Moreover $(1+\rho)p > 1$ hence, owing to Marcinkiewicz-Zygmund inequality~\eqref{eq:marcinkZ_iid}, we have
\begin{multline*}
    \esp{\left|\sum_{m=2}^M (\bar X_{K,1} -\bar X_{K,m})\right|^{(1+\rho)p}} \\
    \leq B_{(1+\rho)p} \frac{1}{K^{\frac{(1+\rho)p}{2}}} \int \PP_Y(\d y) \esp{\left|  (M-1)F(y, Z_{1}) - \sum_{m=2}^M  F(y, Z_{K(m-1) + 1})\right|^{(1+\rho)p}}.
\end{multline*}
Applying twice Minkowski's Inequality yields
\begin{equation*}
    \esp{\left|\sum_{m=2}^M (\bar X_{K,1} -\bar X_{K,m})\right|^{(1+\rho)p}} 
    \leq C_{p,\rho} \frac{1}{K^{\frac{(1+\rho)p}{2}}} (M-1)^{(1+\rho)p} 2^{(1+\rho)p} \esp{\left|  F(Y, Z_{1}) \right|^{(1+\rho)p}}.
\end{equation*}
Plugging this in~\eqref{eq::main-inequality} we get
\begin{equation*}
    \normLp[4]{p}{\frac 1M \sum_{m=1}^M f(\bar X_{K,m}) - f(\bar{x})}^p \leq  \widetilde V_1 \pa{1 - \frac{1}{M}}^{\frac{p(1+\rho)}{2}} \frac 1{K^{\frac{p(1+\rho)}{2}}}, 
\end{equation*}
with $\ds \widetilde V_1 = [f']_\rho^p C_{p,\rho} 2^{p(1+\rho)} \left( 1-\frac1M\right)^{\frac{(1+\rho)p}{2}} \normLp{(1+\rho) p}{\Xi}^{(1+\rho) p}$, and~\eqref{eq:prop_strong_antithetic} is proved.
\end{proof}

If we replace the $\rho$--Hölder assumption on $f'$ by a weaker assumption $f'$ locally $\rho$--Hölder, $i.e.$
$$\forall x,y\in\R, \quad \left| f'(x) - f'(y) \right| \leq C |x-y|^\rho \left( 1 + |x|^q + |y|^q \right),$$
a strong convergence assumption with $\beta = 1 + \rho > 1$ similar to~\eqref{eq:prop_strong_antithetic} can still be  proved. Since $|x_m|^q\leq\max(|\bar X_{K,m}|^q,|\bar X_{MK}|^q)\leq |\bar X_{K,m}|^q+|\bar X_{MK}|^q$, Inequality~\eqref{eq::rho_inequality} must be replaced by
\begin{multline} \label{eq::locally_rho_inequality}
 \normLp[4]{p}{\frac 1M \sum_{m=1}^M f(\bar X_{K,m}) - f(\bar X_{MK})} = \normLp[4]{p}{\frac 1M \sum_{m=1}^M \left( f'(x_m) - f'(\bar X_{MK})\right)(\bar X_{K,m}-\bar X_{MK})} \\
  \leq [f']_\rho \frac 1M \sum_{m=1}^M \normLp[2]{p}{|\bar X_{K,m}-\bar X_{MK}|^{1+\rho}(1+|\bar X_{K,m}|^q+2|\bar X_{MK}|^q)}.
\end{multline}
Owing to Hölder's Inequality with $r,s>1$ such that $\frac1r +\frac 1s = 1$ and  Minkowski's Inequality, we get 
\begin{multline*} 
    \normLp[2]{p}{|\bar X_{K,m}-\bar X_{MK}|^{1+\rho}(1+|\bar X_{K,m}|^q+2|\bar X_{MK}|^q)}
    \\ \leq  \normLp[2]{pr}{|\bar X_{K,m}-\bar X_{MK}|^{1+\rho}}
    \left( 1+ \normLp[2]{ps}{|\bar X_{K,m}|^q} + 2 \normLp[2]{ps}{|\bar X_{MK}|^q} \right).
\end{multline*}
Since the variables $(\bar X_{K,m})_{m=1,\ldots,M}$ are identically distributed, Inequality ~\eqref{eq::locally_rho_inequality} yields
\begin{multline*}
    \normLp[4]{p}{\frac 1M \sum_{m=1}^M f(\bar X_{K,m}) - f(\bar X_{MK})} \\
    \leq [f']_\rho \normLp[2]{pr}{|\bar X_{K,1}-\bar X_{MK}|^{1+\rho}} 
    \left( 1+ \normLp[2]{ps}{|\bar X_{K,m}|^q} + 2 \normLp[2]{ps}{|\bar X_{MK}|^q} \right).
\end{multline*}

The analysis of the term $\normLp[2]{pr}{|\bar X_{K,1}-\bar X_{MK}|^{1+\rho}}$  does not change, except for the condition $\Xi\in L^{(1+\rho)pr}$. Under the assumption $\Xi \in L^{qps}$, the term $\normLp[2]{ps}{|\bar X_{K,m}|^q} + 2 \normLp[2]{ps}{|\bar X_{MK}|^q}$ is bounded, since
\begin{equation*}
    \normLp[2]{ps}{X_{K,1}|^q} = \normLp[4]{ps}{\left|\frac 1K \sum_{k=1}^K F(Y, Z_k) \right|^q} 
    \leq \normLp[1]{qps \vee 1}{\Xi}^q.
\end{equation*} 
Keeping in mind that $r = s/(s-1)$, the optimal choice for $s$ which minimizes both $(1+\rho)pr$ and $qps$ is given by $s = (1+\rho+q)/q$ (hence $r=(1+\rho+q)/(1+\rho)$). This leads to  the additional condition $\Xi\in L^{p(1+\rho+q)}$.
In conclusion, if $f'$ is locally $\rho$--Hölder, under the assumption $\Xi \in L^{p(1+\rho+q)}$, $Y_{h, \frac hM}$ satisfies the $L^p$ version of the strong convergence assumption with $\beta = 1+\rho>1$, similarly to~\eqref{eq:prop_strong_antithetic}.

\section{Indicator function and smooth density}
\label{sec::non_smooth}
There are many situations where we need to consider  non smooth payoff functions of the type $f=\un_{\{g(\esp{\Xi|Y})\in I\}}$, with $g:\R\to\R$ and $I\subset \R$ interval.
Among them we can cite the  computation of loss thresholds, \ie when we search, a threshold $q\in\RR$ being fixed, for the corresponding $\alpha_q\in[0,1]$ such that 
$$1-\alpha_q = \PP (g(\esp{\Xi|Y})\geq q) = \esp{\un_{\{ g(\esp{\Xi|Y})\geq q\}}},$$ 
or the inverse problem, which consists in computing the quantile $q_\alpha$ such that for a fixed $\alpha\in[0,1]$,
$$1-\alpha = \PP (g(\esp{\Xi|Y})\geq q_\alpha) = \esp{\un_{\{ g(\esp{\Xi|Y})\geq q_\alpha\}}}.$$
Another situation of interest is the approximation of density functions (see the seminal paper of Bally and Talay~\cite{BaTa196} and~\cite{BaTa296}, treating the law of the Euler scheme for distributions). 

The payoff function $f$ being non smooth, the regularity assumptions on $f$ that we needed to prove the weak and the strong convergence of the estimator in the smooth case, will be replaced by some regularity assumptions on the density functions, as we detail in the next two Subsections. 

\subsection{Weak error}
\label{sec:non_smooth_bias_error}
We recall the notation that $X_0 = \esp{\Xi|Y}$ and $X_h = \frac{1}{K} \sum_{k=1}^K F(Y, Z_k)$ with $h = \frac{1}{K} \in \Hr$ and we introduce the notation 
\begin{equation*}
    \Delta_h = X_h - X_0.
\end{equation*}

The following result on the weak error derives from Lemma~\ref{lem:taylor} and gives a bias error expansion relying on the density of the joint distribution of $(X_0,  \Delta_h)$ and of $(X_0,  Y)$. More precisely, assume that $(X_0,  \Delta_h)$ is a random vector with smooth density with respect to the Lebesgue measure on $\R^2$. Let $f_{X_0}$ be the density of $X_0$, let $f_{X_0, Y}$ be the  density of $(X_0, Y)$ and let $f_{X_0, \Delta_h}$ be the  density of $(X_0, \Delta_h)$.  
Moreover let $F_{ X_h}(x)$ and $F_{ X_0}(x)$ be the cumulative distribution functions of $ \Delta_h $ and $ X_0$.

\begin{Proposition} [Bias error (II): smooth density] \label{prop:nested_weakII}  $(a)$ Let $R \ge 0$. Assume that the partial derivatives $\partial_x^{(\ell)} f_{X_0,Y}(x,y)$ exist for $\ell=1,\ldots, 2R$, that the partial derivatives $\partial_x^{(\ell)} f_{X_0,\Delta_h}(x,y)$ exist for $\ell=1,\ldots, 2R+1$ and that $\partial_x^{(2R+1)} f_{X_0,\Delta_h}(x,y)$ is continuous.
    Assume that $\Xi \in L^{2R+1}$. 

    Let 
    \begin{equation*}
     P_{r}(x) =\frac{1}{ f_{ X_0}(x)} \sum_{\ell=r+1}^{(2r+1)\wedge 2R}(-1)^{\ell} \int_{\R} b_{r,\ell-r} (y) \partial_x^{(\ell)} f_{X_0,Y}(x,y) \d y.
    \end{equation*}
    
    \noindent $(a)$ If $\sup_{h\in \Hr, x, v \in \R}\left|\partial_x^{(2R+1)} f_{X_0 | \Delta_h = v}(x)\right|<+\infty$, then 
    \begin{equation}\label{eq:fY_0}
        f_{ X_h}(x) = f_{ X_0}(x) + f_{ X_0}(x) \sum_{r=1}^{R} \frac{h^r}{r!} P_{r}(x)  + \calig{O}(h^{R+\frac{1}{2}}) 
    \end{equation}
    uniformly with respect to $x \in \R$.
    
    \noindent $(b)$   If furthermore $\sup_{h\in \Hr, x, v\in \R}\left|\partial_x^{(2R)} f_{X_0|\Delta_h = v}(x) \right|<+\infty$ and $\lim_{x\to -\infty}\partial_x^{(2R)} f_{X_0|\Delta_h = v}(x)=0$ for every $v\!\in \R$, then 
  \begin{equation}
        F_{ X_h}(x) = F_{ X_0}(x) + \sum_{r=1}^{R} \frac{h^r}{r!} \esp{P_{r}( X_0)\mbox{\bf 1}_{\{ X_0\le x\}} } + \calig{O}(h^{R+\frac{1}{2}})
    \end{equation}
     uniformly with respect to $x \in \R$.
\end{Proposition}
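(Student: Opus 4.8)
The plan is to apply Lemma~\ref{lem:taylor} with a well-chosen family of test functions $g$ and then recognize the resulting expansion as an expansion of the density $f_{X_h}$ (part $(a)$) or of the cumulative distribution function $F_{X_h}$ (part $(b)$). For part $(a)$, fix $x\in\R$ and, morally, take $g = \delta_x$ the Dirac mass; rigorously, one works with a smooth approximation or, better, uses the identity $f_{X_h}(x) = \esp{\delta_x(X_h)}$ interpreted via the joint density $f_{X_0,\Delta_h}$. Concretely, since $X_h = X_0 + \Delta_h$, one writes $f_{X_h}(x) = \int_\R f_{X_0,\Delta_h}(x-v, v)\,\d v$ and Taylor-expands $v\mapsto f_{X_0,\Delta_h}(x-v,v)$ in the first variable around $v=0$, which is exactly the Bell-polynomial mechanism of Lemma~\ref{lem:taylor} applied ``in the density variable''. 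The moments $\esp{\Delta_h^n \mid X_0=x, \ldots}$ are governed by the same cumulant computation $\kappa_j(E_h(y)) = h^{j-1}\kappa_{j,y}$ as in the Lemma, so the coefficients $b_{r,\ell-r}(y)$ reappear; integrating the conditional density against $f_{X_0,Y}$ and performing the $\ell$ successive integrations by parts in $x$ (each one producing a factor $(-1)$, hence the $(-1)^\ell$) converts the $\partial_x^{(\ell)}$ acting on $g$ into $\partial_x^{(\ell)}$ acting on $f_{X_0,Y}(x,y)$, and dividing by $f_{X_0}(x)$ yields precisely $P_r(x)$. The remainder is controlled by the boundedness assumption $\sup_{h,x,v}|\partial_x^{(2R+1)} f_{X_0\mid\Delta_h=v}(x)| < +\infty$ together with the moment bound $\esp{|\Delta_h|^{2R+1}} = \mathcal{O}(h^{R+1/2})$ obtained exactly as in~\eqref{eq:Yh-Y_0:2R+1} in the proof of Proposition~\ref{prop:nested_weakI}; this is what gives the $\mathcal{O}(h^{R+1/2})$, uniformly in $x$.

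For part $(b)$, the natural test function is $g = \mathbf{1}_{(-\infty,x]}$, so that $\esp{g(X_h)} = F_{X_h}(x)$ and $\esp{g(X_0)} = F_{X_0}(x)$. This $g$ is not differentiable, so one cannot invoke Lemma~\ref{lem:taylor} directly; instead I would integrate the result of part $(a)$ over $(-\infty,x]$, i.e.\ write $F_{X_h}(x) = \int_{-\infty}^x f_{X_h}(u)\,\d u$ and integrate~\eqref{eq:fY_0}. The term $\int_{-\infty}^x f_{X_0}(u) \frac{h^r}{r!} P_r(u)\,\d u$ is then rewritten, unwinding the definition of $P_r$, as $\frac{h^r}{r!}\sum_\ell (-1)^\ell \int_\R b_{r,\ell-r}(y)\int_{-\infty}^x \partial_u^{(\ell)} f_{X_0,Y}(u,y)\,\d u\,\d y$; one integration by parts in $u$ turns $\int_{-\infty}^x \partial_u^{(\ell)} f_{X_0,Y}(u,y)\,\d u$ into $\partial_x^{(\ell-1)} f_{X_0,Y}(x,y)$ (the boundary term at $-\infty$ vanishing), and after reorganizing one recognizes $\esp{P_r(X_0)\mathbf{1}_{\{X_0\le x\}}}$ — equivalently, one can argue more cleanly that $\esp{P_r(X_0)\mathbf{1}_{\{X_0\le x\}}} = \int_{-\infty}^x f_{X_0}(u) P_r(u)\,\d u$ by the very definition of the density of $X_0$ and Fubini, so this step is essentially bookkeeping. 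The extra hypotheses in $(b)$, namely $\sup_{h,x,v}|\partial_x^{(2R)} f_{X_0\mid\Delta_h=v}(x)|<+\infty$ and $\lim_{x\to-\infty}\partial_x^{(2R)} f_{X_0\mid\Delta_h=v}(x)=0$, are exactly what is needed to justify the integration by parts at order $2R$ in the remainder term and to make the boundary contributions at $-\infty$ vanish; the $\mathcal{O}(h^{R+1/2})$ for $F_{X_h}$ then follows by integrating the uniform remainder bound from $(a)$ (the integral $\int_{-\infty}^x$ of a uniformly $\mathcal{O}(h^{R+1/2})$ density remainder is itself uniformly $\mathcal{O}(h^{R+1/2})$, using integrability provided by the density assumptions).

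I expect the main obstacle to be the rigorous justification of the "$g=\delta_x$" heuristic in part $(a)$: one must legitimately differentiate under the integral sign and perform $\ell$ integrations by parts transferring derivatives from a distributional test function onto $f_{X_0,Y}$, all while tracking that the error terms are controlled \emph{uniformly in $x$ and in $h\in\Hr$}. The clean way to do this is to condition on $\Delta_h = v$, so that $f_{X_h}(x) = \int_\R f_{X_0\mid\Delta_h=v}(x-v)\,f_{\Delta_h}(v)\,\d v$, Taylor-expand $v\mapsto f_{X_0\mid\Delta_h=v}(x-v)$ to order $2R$ in the shift, and then use that $\int v^n f_{\Delta_h}(v)\,\d v = \esp{\Delta_h^n}$ is exactly the moment computed via Bell polynomials in Lemma~\ref{lem:taylor}; the smoothness and boundedness hypotheses on the conditional densities are precisely tailored to make every step legitimate and the remainder uniform. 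Once $(a)$ is established cleanly, $(b)$ is a comparatively routine integration argument.
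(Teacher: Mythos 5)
Part $(a)$ of your proposal is essentially the paper's argument: the ``$g=\delta_x$'' heuristic is made rigorous by applying Lemma~\ref{lem:taylor} to an arbitrary smooth compactly supported test function $g$, integrating by parts $\ell$ times in each coefficient (which produces the $(-1)^\ell\,\partial_x^{(\ell)}f_{X_0,Y}$) and $2R+1$ times in the remainder (which produces a kernel $r(h,x)$ expressed through $\partial_x^{(2R+1)}f_{X_0|\Delta_h=v}$), and then identifying densities; the uniform $\calig{O}(h^{R+1/2})$ follows from the moment bound~\eqref{eq:Yh-Y_0:2R+1} exactly as you say. (Your ``direct'' variant, Taylor-expanding $v\mapsto f_{X_0|\Delta_h=v}(x-v)$, is workable but needs an unstated interchange of $\partial_x^{(n)}$ with the integral in $v$, and the relevant moments are the conditional ones $\esp[1]{\Delta_h^n\,|\,X_0=x}$ rather than $\esp[1]{\Delta_h^n}$; the test-function route sidesteps this.)

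The genuine gap is in part $(b)$, which is not ``routine integration''. First, you treat $\esp[1]{P_r(X_0)\ind{X_0\le x}}=\int_{-\infty}^x P_r(u)f_{X_0}(u)\,\d u$ as bookkeeping, but nothing in the hypotheses guarantees that $u\mapsto P_r(u)f_{X_0}(u)=\sum_\ell(-1)^\ell\int_\R b_{r,\ell-r}(y)\,\partial_u^{(\ell)}f_{X_0,Y}(u,y)\,\d y$ is integrable near $-\infty$: the assumptions of $(b)$ control conditional densities given $\Delta_h$, not the decay of $\partial_u^{(\ell)}f_{X_0,Y}$, so your integration by parts ``with vanishing boundary term at $-\infty$'' is unjustified. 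The paper devotes most of the proof of $(b)$ to precisely this point: writing the expansion~\eqref{eq::first_density} at $2R-1$ pairwise distinct steps $h_1,\dots,h_{2R-1}$ yields a Vandermonde system for the functions $P_r(\cdot)f_{X_0}(\cdot)/r!$, which Cramer's rule inverts into fixed linear combinations of $\rho_i=(f_{X_{h_i}}-f_{X_0}-r(h_i,\cdot))/h_i$, each of which has a (semi-)convergent integral over $(-\infty,b]$ because $f_{X_{h_i}}$ and $f_{X_0}$ are probability densities and $\int_{-\infty}^b r(h_i,x)\,\d x$ is finite. Second, your closing claim that the integral over $(-\infty,x]$ of a uniformly $\calig{O}(h^{R+1/2})$ remainder is itself $\calig{O}(h^{R+1/2})$ is false without an integrable dominant; the paper instead computes $\int_{-\infty}^b r(h,x)\,\d x$ exactly by Fubini and the fundamental theorem of calculus --- this is where the boundedness of $\partial_x^{(2R)}f_{X_0|\Delta_h=v}$ and its vanishing at $-\infty$ actually enter --- and only then bounds the result by $\calig{O}(h^{R+1/2})$ via~\eqref{eq:Yh-Y_0:2R+1}.
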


\begin{proof}[Proof]
The case $R=0$ is trivial, using the expansion~\eqref{eq:taylor_nested} and the convention $\sum_{r=1}^0 = 0$.
Let $g:\R\to \R$ be an infinitely differentiable test function with compact support. We apply the expansion~\eqref{eq:taylor_nested} to the smooth function $g$  where coefficients $c(r,2r+1)$, $r=1,\ldots,R-1$ and $c(r, 2R)$, $r=R,\ldots,2R-1$ are given by~\eqref{eq:taylor_coeff} and the remainder term $\calig{R}^{g}_{2R+1}$ is given by~\eqref{eq:taylor_reste}. 

We first note that, for every $\ell\!\in \{1,\ldots, 2R\}$, 
\[
 \esp{g^{(\ell)}( X_0) b_{r, \ell-r}(Y)} = \int_{\R^2} g^{(\ell)}(x) b_{r, \ell-r}(y) f_{X_0,Y}(x,y)\d x \d y.
\]
Then, performing successively $\ell$ integrations by parts yields
\[
 \esp{g^{(\ell)}( X_0) b_{r, \ell-r}(Y)} =  \int_{\R} g(x)  \int_{\R} (-1)^{\ell} b_{r, \ell-r}(y) \partial_x^{(\ell)}f_{X_0,Y}(x,y) \d y \d x.
 \]
 As for the remainder term,
\begin{align*}
 \calig{R}^{g}_{2R+1} &= \frac{1}{(2R)!} \esp{\int_0^{X_h-X_0} g^{(2R+1)}(t+ X_0) ( X_h- X_0-t)^{2R} \d t}\\
 &= \frac{1}{(2R)!}  \esp{\int_0^1 g^{(2R+1)}(X_0 + s \Delta_h) (\Delta_h)^{2R+1} (1-s)^{2R} \d s}\\
 &= \frac{1}{(2R)!}  \int_0^1 \int_{\R^2} g^{(2R+1)}(x) v^{2R+1} f_{X_0, \Delta_h}(x - sv, v) \d x \d v (1-s)^{2R} \d s.
\end{align*}
Performing successively $2R+1$ integrations by parts yields
\begin{align*}
    \calig{R}^{g}_{2R+1} &= \frac{1}{(2R)!} \int_0^1 \int_{\R} \pa{\int_{\R} g(x) \partial_x^{(2R+1)} f_{X_0, \Delta_h}(x - sv, v) \d x} v^{2R+1} \d v (1-s)^{2R} \d s, \\
    &= \int_{\R} g(x) r(h,x) \d x, 
\end{align*}
where  
\begin{equation}\label{eq::rest_def}
    \begin{aligned}
    r(h, x) &= \frac{1}{(2R)!}  \int_{0}^1 \int_{\R} \partial_x^{(2R+1)} f_{X_0, \Delta_h}(x - sv, v) v^{2R+1} \d v (1-s)^{2R} \d s, \\
    &= \frac{1}{(2R)!}  \int_{0}^1 \int_{\R} \partial_x^{(2R+1)} f_{X_0|\Delta_h = v}(x - sv) f_{\Delta_h}(v) v^{2R+1} \d v (1-s)^{2R} \d s. \\
    \end{aligned}
\end{equation}
 Plugging these identities in~\eqref{eq:taylor_nested}, we get  that, for every test-function $g$,
 \[
 \esp{g( X_h)} = \int_{\R^d} g(x)\left[f_{ X_0}(x) + f_{ X_0}(x) \sum_{r=1}^{R} \frac{h^r}{r!} P_{r}(x) + \widetilde r(h,x)\right]\d x, 
 \]
 where 
 \[
  \widetilde r(h,x) = f_{ X_0}(x) \sum_{r=R+1}^{2R-1} \frac{h^r}{r!} P_{r}(x) + r(h,x),
 \]
 Hence 
 \begin{equation}
  \label{eq::first_density}
  f_{ X_h}(x) = f_{ X_0}(x) + f_{ X_0}(x) \sum_{r=1}^{R} \frac{h^r}{r!} P_{r}(x)    + \widetilde r(h,x).
 \end{equation}
 The continuity of the function on the right hand side of the above equality will establish the announced expansion, provided we show that $r(h,x)= \calig{O}\big(h^{R+\frac 12}\big)$ uniformly with respect to $x\!\in \R$.  It follows from the boundedness assumption made on $\partial_x^{(2R+1)} f_{X_0|\Delta_h = v}(x)$ that
\begin{equation}
\label{eq::rest_upper_bound}
    \abs{r(h, x)} \le \frac{1}{(2R)!}  \sup_{x,v \in \R} \Big|\partial_x^{(2R+1)} f_{X_0|\Delta_h = v}(x) \Big|\esp{\frac{\abs{\Delta_h}^{2R+1}}{2R+1}}\le\frac{C_{_{\Xi,R}}}{(2R+1)!}h^{R+\frac12}, 
\end{equation} 
owing to the upper-bound established in~\eqref{eq:Yh-Y_0:2R+1} for $\esp{| X_h- X_0|^{2R+1}}$, since $\Xi \in L^{2R+1}$.

\noindent $(b)$ The claim amounts to integrating Equation~\eqref{eq::first_density}, provided we show that the integrals of $P_{r}(x) f_{ X_0} (x)$, for $r=1, \ldots,2R-1$, are at least semi-convergent and that, for all $b\in\RR$,  $\int_{-\infty}^b r(h,x) \d x = \calig{O}(h^{R+\frac12})$.  Owing to Fubini's Theorem, using the definition~\eqref{eq::rest_def} of $r(h,x)$, we have for all $a < b \in \R$,  
\begin{multline*}
    \int_a^b r(h, x) \d x = \frac{1}{(2R)!}  \int_{0}^1 \int_{\R} \pa{\partial_x^{(2R)} f_{X_0|\Delta_h = v}(b - sv) - \partial_x^{(2R)} f_{X_0|\Delta_h = v}(a-sv)} \\ \times f_{\Delta_h}(v) v^{2R+1} \d v (1-s)^{2R} \d s
\end{multline*}
The assumption $\sup_{h\in \Hr,x,v\in \R}| f^{(2R)}_{ X_0| \Delta_h=v}(x)|<+\infty$ and the upper bound~\eqref{eq:Yh-Y_0:2R+1}, yield that $\int_a^b \abs{r(h, x)} \d x < +\infty$. 
Hence, owing to Lebesgue's Dominated Convergence Theorem and to the assumption \newline $\lim_{x\to -\infty}f^{(2R)}_{ X_0| \Delta_h=v}(x)=0$, we get
\begin{equation}\label{eq::finite_rest}
\begin{aligned}
    \int_{-\infty}^b r(h,x) \d x &= \frac{1}{(2R)!}  \int_{0}^1 \int_{\R} \partial_x^{(2R)} f_{X_0|\Delta_h = v}(b - sv) f_{\Delta_h}(v) v^{2R+1} \d v (1-s)^{2R} \d s, \\
    &= \frac{1}{(2R)!} \int_0^1 \esp{\partial_x^{(2R)} f_{X_0|\Delta_h}(b - s \Delta_h) (\Delta_h)^{2R+1}} (1-s)^{2R} \d s. 
\end{aligned} 
\end{equation}
and then, likewise~\eqref{eq::rest_upper_bound}, using the boundedness of $f^{(2R)}_{_{ X_0\,|\, \Delta_h=v}}(x) $,
\begin{equation}\label{eq::rest_semiconvergent}
   \int_{-\infty}^b r(h,x) \d x  = \calig{O}(h^{R+\frac12}).       
\end{equation}
Owing to Equation~\eqref{eq::first_density}, if we take $h_1,\ldots, h_{2R-1} \in\H$ pairwise distinct,  we get, for all $i=1,\ldots, 2R-1$, 
\begin{equation}\label{eq::p_vandermond_system}
 \sum_{r=1}^{2R-1} h_i^{r-1} \left( \frac{ P_r(x)}{r!} f_{ X_0} (x) \right) = \rho_i(x), 
\end{equation}
with 
\begin{equation}\label{eq::Xi_def}
\rho_i(x) = \frac{f_{ X_{h_i}}(x) - f_{ X_{0}}(x) - r(h_i, x)}{h_i}, \quad i=1,\ldots,2R-1.
\end{equation}
Hence we get a Vandermonde system, $V u(x) = \rho(x)$ with 
$$V = V(h_1,\ldots, h_{2R-1}) = \left(
\begin{array}{ccccc}
 1 & h_1 & h_1^2 & \ldots & h_1^{2R-2} \\
 1 & h_2 & h_2^2 & \ldots & h_2^{2R-2} \\
 \vdots & \vdots & \vdots & \ldots & \vdots \\
 1 & h_{2R-1} & h_{2R-1}^2 & \ldots & h_{2R-1}^{2R-2}
\end{array} \right),
$$
$$u(x)= (u_1(x), \ldots,u_{2R-1}(x)) \quad \mbox{with} \quad u_r(x) =  \frac{ P_r(x)}{r!} f_{ X_0} (x)$$
and
$$\rho(x) = (\rho_1(x),\ldots,\rho_{2R-1}(x)).$$
We set, for all $j=1, \ldots,2R-1$, 
$$\widetilde V_j(x) = \widetilde V_j(h_1,\ldots,h_{2R-1}, \rho(x)) = \left(
\begin{array}{ccccccc}
 1 	& \ldots & h_1^{j-2} 	& \rho_1(x) 	& h_1^j 	& \ldots & h_1^{2R-2} \\
 1 	& \ldots & h_2^{j-2} 	& \rho_2(x) 	& h_2^j 	& \ldots & h_2^{2R-2} \\
 \vdots & \ldots & \vdots 	& \vdots 	& \vdots 	& \ldots & \vdots \\
 1 	& \ldots & h_{2R-1}^{j-2} & \rho_{2R-1}(x) 	& h_{2R-1}^j 	& \ldots & h_{2R-1}^{2R-2} 
\end{array} \right).$$
By expanding along the $j^{th}$ column, the determinant of $\widetilde V_j$ writes
$$\det(\widetilde V_j(x)) = \sum_{i=1}^{2R-1} (-1)^{i+j} d_{ij} \rho_i(x), $$
where $d_{ij} := d_{ij}(h_1,\ldots,h_{i-1},h_{i+1},\ldots,h_{2R})\in\RR$ with
$$d_{ij}(h_1,\ldots,h_{i-1},h_{i+1},\ldots,h_{2R-1}) = \det\left(
\begin{array}{cccccc}
 1 	& \ldots & h_1^{j-2} 	&  h_1^j 	& \ldots & h_1^{2R-2} \\
 \vdots & \ldots & \vdots 	&  \vdots 	& \ldots & \vdots \\
 1 	& \ldots & h_{i-1}^{j-2}&  h_{i-1}^j 	& \ldots & h_{i-1}^{2R-2} \\
 1 	& \ldots & h_{i+1}^{j-2}&  h_{i+1}^j 	& \ldots & h_{i+1}^{2R-2} \\
 \vdots & \ldots & \vdots 	&  \vdots 	& \ldots & \vdots \\
 1 	& \ldots & h_{2R-1}^{j-2} &  h_{2R-1}^j 	& \ldots & h_{2R-1}^{2R-2} 
\end{array} \right).$$
Hence, owing to Cramer's rule, the solution of the Vandermonde system writes
$$u_j(x) = \frac{\det\left( \widetilde V_j(x)\right)}{\det(V)} = \frac{1}{\det(V)}\sum_{i=1}^{2R-1}  (-1)^{i+j} d_{ij} \rho_i(x).$$
Finally, since we saw that for all $i = 1, \ldots, 2R-1$, the integral of $\rho_i(x)$ is semi-convergent, we deduce the semi-convergence of the integral   
$$\int_{-\infty}^b  P_r(x) f_{ X_0} (x) \d x  =  \frac{r!}{\det(V)}\sum_{i=1}^{2R-1} (-1)^{i+r} d_{ir} \int_{-\infty}^b \rho_i(x) \d x,$$
where, owing to the expression~\eqref{eq::Xi_def} and to~\eqref{eq::rest_semiconvergent},  $\int_{-\infty}^b \rho_i(x) \d x$ is finite,
which concludes the proof.

\end{proof}

\subsection{Strong convergence rate}
We conclude by showing the strong convergence rate~\eqref{strong_error} for the nested Monte Carlo estimator.
The following Lemma  is more or less standard (see for instance~\cite{Av09}).
\begin{Lemma}\label{lem:StrEr1}
 Let $\xi$ and $\xi'$ be two  real valued random variables lying in $L^p$, $p\ge 1$, with densities $f_{\xi}$ and $f_{\xi'}$ respectively. Then, for every $x \!\in \R$,  
\begin{equation}\label{eq:StrErQuantile}
\Big\|   \mbox{\bf 1}_{\{\xi\le x \}}-\mbox{\bf 1}_{\{\xi' \le x\}} \Big\|_2^2\le \Big(p^{\frac{p}{p+1}} +p^{\frac{1}{p+1}}\Big)\left(\big\|f_{\xi}\big\|_{\sup} +\big\|f_{\xi'}\big\|_{\sup}\right)^{\frac{p}{p+1}}\big\|\xi-\xi'\big\|_p^{\frac{p}{p+1}}.
\end{equation}
\end{Lemma}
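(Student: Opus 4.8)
The plan is to run the classical bounded-density argument of Avikainen type (cf.~\cite{Av09}). The first step is to reduce the left-hand side to a single probability: since both indicators take values in $\{0,1\}$, one has $\bigl(\un_{\{\xi\le x\}}-\un_{\{\xi'\le x\}}\bigr)^2=\un_E$ where $E:=\{\un_{\{\xi\le x\}}\neq\un_{\{\xi'\le x\}}\}$, so that $\bigl\|\un_{\{\xi\le x\}}-\un_{\{\xi'\le x\}}\bigr\|_2^2=\mathbf{P}(E)$ and everything reduces to bounding $\mathbf{P}(E)$. One may assume $\|f_\xi\|_\infty+\|f_{\xi'}\|_\infty<+\infty$ and $\mathbf{P}(\xi\neq\xi')>0$, the other cases being trivial.

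The key elementary observation is that on $E$ exactly one of $\xi,\xi'$ lies in $(-\infty,x]$, whence $\max(|\xi-x|,|\xi'-x|)\le|\xi-x|+|\xi'-x|=|\xi-\xi'|$. Consequently, for every threshold $\delta>0$,
\[
E\subseteq\bigl\{|\xi-\xi'|>\delta\bigr\}\cup\bigl(\{|\xi-x|\le\delta\}\cap\{|\xi'-x|\le\delta\}\bigr),
\]
so that $\mathbf{P}(E)\le\mathbf{P}(|\xi-\xi'|>\delta)+\min\bigl(\mathbf{P}(|\xi-x|\le\delta),\mathbf{P}(|\xi'-x|\le\delta)\bigr)$. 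For the first term, Markov's inequality at the power $p$ (legitimate since $\xi,\xi'\in L^p$) gives $\mathbf{P}(|\xi-\xi'|>\delta)\le\delta^{-p}\|\xi-\xi'\|_p^p$. For the second, the densities give $\mathbf{P}(|\xi-x|\le\delta)=\int_{x-\delta}^{x+\delta}f_\xi\le2\delta\|f_\xi\|_\infty$, and likewise for $\xi'$; using $\min(a,b)\le\tfrac12(a+b)$, the minimum is at most $\delta\bigl(\|f_\xi\|_\infty+\|f_{\xi'}\|_\infty\bigr)$. Writing $A:=\|f_\xi\|_\infty+\|f_{\xi'}\|_\infty$ and $B:=\|\xi-\xi'\|_p^p$, this yields $\mathbf{P}(E)\le\delta A+\delta^{-p}B$ for all $\delta>0$.

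It then remains to optimise in $\delta$. The choice $\delta=(pB/A)^{1/(p+1)}$ (or any comparable one) gives $\mathbf{P}(E)\le\bigl(p^{1/(p+1)}+p^{-p/(p+1)}\bigr)A^{p/(p+1)}B^{1/(p+1)}$, which is dominated by $\bigl(p^{p/(p+1)}+p^{1/(p+1)}\bigr)A^{p/(p+1)}B^{1/(p+1)}$ since $p^{-p/(p+1)}\le p^{p/(p+1)}$ for $p\ge1$; recalling $B^{1/(p+1)}=\|\xi-\xi'\|_p^{p/(p+1)}$ gives exactly~\eqref{eq:StrErQuantile}. I do not foresee a genuine obstacle here: the only point needing care is the localisation of $E$ — namely that, off the ``large jump'' event $\{|\xi-\xi'|>\delta\}$, both variables are forced into a window of width $2\delta$ around $x$, which is what makes the two density bounds enter additively — while the $\delta$-optimisation and the verification that the resulting constant is no larger than the stated one are routine.
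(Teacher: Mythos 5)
Your proof is correct and follows essentially the same route as the paper's: split the event where the indicators differ according to whether $|\xi-\xi'|$ exceeds a threshold, bound the large-deviation part by Markov's inequality in $L^p$ and the remaining part by the densities over a window around $x$, then optimise the threshold. The only cosmetic difference is that you use a symmetric window with a $\min$ (equivalently $\tfrac12$ of the sum) where the paper uses one-sided windows $[x,x+L]$ for each of the two ordering events, and both yield the same bound $\delta A+\delta^{-p}B$ and hence the same constant.
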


\begin{proof}[Proof] Let $L>0$. Note that
\begin{eqnarray*}
\Big\|   \mbox{\bf 1}_{\{\xi\le x \}}-\mbox{\bf 1}_{\{\xi'\le x \}} \Big\|_2^2 &=& \PP\big(\xi\le x\le \xi'\big)+ \PP\big(\xi'\le x\le \xi\big)\\
&\le& \PP\big(\xi \le x,\, \xi'\ge x+L\big)+\PP\big(\xi \le x \le \xi'\le x+L\big)\\
&& + \PP\big(\xi'\le x,  \xi\ge x+L\big)+\PP\big(\xi' \le x \le \xi\le x+L\big)\\
&\le &  \PP\big(\xi'-\xi\ge L) + \PP\big(\xi-\xi'\ge L)\\ 
&& + \PP(\xi'\!\in [x, x+L]) +  \PP(\xi\!\in [x, x+L]) \\
&=& \PP\big(|\xi'-\xi|\ge L) + \PP(\xi\!\in [x, x+L]) +  \PP(\xi'\!\in [x, x+L]) \\
&\le& \frac{\esp{|\xi'-\xi|^p}}{L^p} +L \Big( \big\|f_{\xi}\big\|_{\sup}+ \big\|f_{\xi'}\big\|_{\sup}   \Big).
\end{eqnarray*}

A straightforward optimization in $L$ yields the announced result.
\end{proof} 

The strong convergence is a consequence of the Proposition~\ref{prop:nested_weakII} combined with the previous Lemma and Lemma~\ref{lemma::strong_conv_useful}.

\begin{Proposition} 
\label{prop::strong_error_indicator}
Assume $X\!\in L^p$, $p\ge 2$. Under the assumptions of Proposition~\ref{prop:nested_weakII} $(a)$ with $R=0$  and if  the density $f_{ X_0}$ is bounded, then there exists  $h_0= \frac{1}{K_0}\!\in \Hr\setminus\{0\}$ such that, for every $h, h'\!\in (0, h_0]$
\[
\Big\|   \mbox{\bf 1}_{\{ X_h\le x \}}-\mbox{\bf 1}_{\{ X_{h'}\le x \}} \Big\|_2^2\le C |h-h'|^{\frac{p}{2(p+1)} },
\]
where $C =     2^{\frac{p}{p+1}}  \Big(p^{\frac{p}{p+1}} +p^{\frac{1}{p+1}}\Big) (\big\|f_{_{ X_0}}\big\|_{\sup} +1)^{\frac{p}{p+1}} \big (4\, B_p \big\|X\big\|_p \big)^{\frac{p}{p+1}}$.  This means that the strong approximation error assumption holds with $\beta =\frac{p}{2(p+1)}\!\in (0,\frac 12)$.
\end{Proposition}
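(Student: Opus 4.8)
The plan is to combine the two deterministic ingredients that have just been assembled: the elementary indicator estimate of Lemma~\ref{lem:StrEr1}, and the moment bound on $\normLp[1]{p}{X_h-X_{h'}}$ furnished by Lemma~\ref{lemma::strong_conv_useful}, together with a uniform sup-norm bound on the densities $f_{X_h}$ coming from Proposition~\ref{prop:nested_weakII}$(a)$ applied with $R=0$. First I would apply Lemma~\ref{lem:StrEr1} to the pair $\xi = X_h$, $\xi' = X_{h'}$, which are in $L^p$ since $\Xi\in L^p$ and $X_h$ is a convex combination of i.i.d.\ copies of $F(Y,Z)$ (so $\normLp[1]{p}{X_h}\le\normLp[1]{p}{\Xi}$ by Jensen/Minkowski). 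This gives
\[
\Big\|\mbox{\bf 1}_{\{X_h\le x\}}-\mbox{\bf 1}_{\{X_{h'}\le x\}}\Big\|_2^2 \le \Big(p^{\frac{p}{p+1}}+p^{\frac{1}{p+1}}\Big)\big(\|f_{X_h}\|_{\sup}+\|f_{X_{h'}}\|_{\sup}\big)^{\frac{p}{p+1}}\|X_h-X_{h'}\|_p^{\frac{p}{p+1}}.
\]

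Next I would control the two factors on the right. For the densities: Proposition~\ref{prop:nested_weakII}$(a)$ with $R=0$ gives $f_{X_h}(x)=f_{X_0}(x)+\calig{O}(h^{1/2})$ uniformly in $x$, so there is $h_0=\frac{1}{K_0}\in\Hr\setminus\{0\}$ such that $\|f_{X_h}\|_{\sup}\le\|f_{X_0}\|_{\sup}+1$ for all $h\in(0,h_0]$; hence $\|f_{X_h}\|_{\sup}+\|f_{X_{h'}}\|_{\sup}\le 2(\|f_{X_0}\|_{\sup}+1)$. For the $L^p$-distance: Lemma~\ref{lemma::strong_conv_useful} gives $\|X_h-X_{h'}\|_p\le 2B_p\,\|\Xi-\esp{\Xi|Y}\|_p\,|h-h'|^{1/2}\le 4B_p\,\|\Xi\|_p\,|h-h'|^{1/2}$, using the triangle inequality and Jensen for conditional expectations to bound $\|\Xi-\esp{\Xi|Y}\|_p\le 2\|\Xi\|_p$. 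Substituting both bounds and collecting the constant yields
\[
\Big\|\mbox{\bf 1}_{\{X_h\le x\}}-\mbox{\bf 1}_{\{X_{h'}\le x\}}\Big\|_2^2 \le 2^{\frac{p}{p+1}}\Big(p^{\frac{p}{p+1}}+p^{\frac{1}{p+1}}\Big)\big(\|f_{X_0}\|_{\sup}+1\big)^{\frac{p}{p+1}}\big(4B_p\|\Xi\|_p\big)^{\frac{p}{p+1}}\,|h-h'|^{\frac{p}{2(p+1)}},
\]
which is exactly the claimed inequality with the stated constant $C$, and the exponent $\beta=\frac{p}{2(p+1)}$ lies in $(0,\frac12)$ since $p\ge2$.

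I do not expect a genuine obstacle here: the statement is essentially a bookkeeping assembly of three previously proved facts. The only point requiring a little care is the passage from the pointwise-in-$x$, asymptotic-in-$h$ statement of Proposition~\ref{prop:nested_weakII}$(a)$ to a uniform-in-$x$, uniform-in-$h\le h_0$ sup-norm bound on $f_{X_h}$; this is legitimate precisely because the $\calig{O}(h^{1/2})$ there is stated to hold uniformly in $x$, so one simply chooses $K_0$ large enough that the remainder is $\le 1$ for all $h=\frac1K\le\frac1{K_0}$. A secondary minor point is checking that all moment hypotheses line up — $X_h,X_{h'}\in L^p$ with $p\ge2\ge1$ is needed for Lemma~\ref{lem:StrEr1}, and $\Xi\in L^p$ with $p>1$ for Lemma~\ref{lemma::strong_conv_useful} — both of which are subsumed under the hypothesis $\Xi\in L^p$, $p\ge 2$.
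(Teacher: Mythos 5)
Your proof is correct and follows essentially the same route as the paper's: apply Lemma~\ref{lem:StrEr1} to $\xi=X_h$, $\xi'=X_{h'}$, bound the densities uniformly for $h\le h_0$ via the $R=0$ case of Proposition~\ref{prop:nested_weakII}$(a)$, and control $\normLp[1]{p}{X_h-X_{h'}}$ by Lemma~\ref{lemma::strong_conv_useful}. Your version is in fact slightly more explicit than the paper's (which simply says ``plugging the above bound and~\eqref{eq:Yh-Yhprime} into~\eqref{eq:StrErQuantile} completes the proof''), and your bookkeeping of the constant $C$ matches the stated one, with $\|X\|_p$ in the paper read as $\normLp[1]{p}{\Xi}$.
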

\begin{proof}[Proof] It follows from Proposition~\ref{prop:nested_weakII} $(a)$ that 
\[
f_{ X_h}(x) \le f_{ X_0}(x) + o(h^{\frac 12})\quad \mbox{ uniformly with respect to $x\!\in \R$}.
\]
Consequently, there exists an $h_0= \frac{1}{K_0}  \!\in \Hr\setminus\{0\}$ such that, for every $h\!\in (0, h_0]$,  
\[
\forall\, x\!\in \R, \; f_{ X_h}(x) \le f_{ X_0} (x)+1.
\]
Plugging the above  bound and~\eqref{eq:Yh-Yhprime} in Inequality~\eqref{eq:StrErQuantile} of Lemma~\ref{lem:StrEr1} applied with $\xi= X_h$ and $\xi'=  X_{h'}$ completes the proof.  
\end{proof} 
\printbibliography

\end{document}